\documentclass[reqno]{amsart}
\numberwithin{equation}{section}

\usepackage[colorlinks,linkcolor={blue}]{hyperref}

\usepackage[alphabetic,nobysame]{amsrefs}
\usepackage{bm}
\usepackage{amssymb}
\usepackage{graphicx} 
\usepackage{amscd}
\usepackage{enumerate}
\usepackage[font=footnotesize]{caption}
\usepackage{dsfont}
\usepackage[colorinlistoftodos]{todonotes}
\usepackage{tikz-cd}

\definecolor{darkred}{rgb}{1,0,0} 
\definecolor{darkgreen}{rgb}{0,0.4,0}
\definecolor{darkblue}{rgb}{0,0,1}

\hypersetup{colorlinks,
linkcolor=darkblue,
filecolor=darkgreen,
urlcolor=darkred,
citecolor=darkgreen}

\makeatletter
\providecommand\@dotsep{5}
\makeatother

\definecolor{orange}{RGB}{253,85,0}

 \newcommand{\HS}[1]{H_{#1}^{S^1}}
 \newcommand{\nablat}{\nabla_t}
 \newcommand{\proj}{p}
 \newcommand{\sign}{\mathrm{sign}}
 \newcommand{\fix}{\mathrm{fix}}
 \newcommand{\crit}{\mathrm{crit}}
 \newcommand{\ev}{\mathrm{ev}}
 \newcommand{\im}{\mathrm{im}}
 \newcommand{\inj}{\mathrm{inj}}
 \newcommand{\ind}{\mathrm{ind}}
 \newcommand{\mul}{\mathrm{mul}}
 \newcommand{\N}{\mathds{N}}
 \newcommand{\Z}{\mathds{Z}}
 \newcommand{\R}{\mathds{R}}

 \newcommand{\C}{\mathds{C}}
 \newcommand{\Q}{\mathds{Q}}

 \newcommand{\Eu}{E^{u}}
 \newcommand{\Es}{E^{s}}

 \newcommand{\id}{\mathrm{id}}

 \DeclareMathOperator{\interior}{int}

 \DeclareMathOperator*{\ttoup}{\llongrightarrow} 
  
 \DeclareMathOperator*{\eembup}{\llonghookrightarrow}

\DeclareRobustCommand{\longhookrightarrow}{\lhook\joinrel\relbar\joinrel\rightarrow}
\DeclareRobustCommand{\llonghookrightarrow}{\lhook\joinrel\relbar\joinrel\relbar\joinrel\rightarrow}
\DeclareRobustCommand{\llongrightarrow}{\relbar\joinrel\relbar\joinrel\rightarrow}

 \theoremstyle{plain}
 \newtheorem{MainThm}{Theorem}

 \newtheorem{theorem}{Theorem}[section]
 \newtheorem{proposition}[theorem]{Proposition}
 \newtheorem{lemma}[theorem]{Lemma}
 \newtheorem{corollary}[theorem]{Corollary}
 
 \theoremstyle{definition}

 \newtheorem{remark}[theorem]{Remark}
 \newtheorem{example}[theorem]{Example}

\title{Klingenberg's theorem on Anosov geodesic flows} 

\author[R. Assouline]{Rotem Assouline}

\author[M. Mazzucchelli]{Marco Mazzucchelli}
\address{Sorbonne Université, Université Paris Cité, CNRS, IMJ-PRG\newline\indent F-75005 Paris, France}
\email{assouline@imj-prg.fr}
\email{marco.mazzucchelli@imj-prg.fr}

\thanks{Rotem Assouline is supported by the Fondation Sciences Mathématiques de Paris (FSMP) and the Rothschild Fellowship (Yad Hanadiv). Marco Mazzucchelli is partially supported by the ANR grant QuanQual (ANR-26-CE40-0676).}

\date{August 10, 2026}

\keywords{geodesic flows, Anosov flows, $S^1$-equivariant Morse theory}

\subjclass[2020]{53C22, 37D40, 53D25}

\begin{document}

\begin{abstract}
In 1970, Klingenberg proved a fundamental result on closed Riemannian manifolds with Anosov geodesic flows, asserting that such manifolds are without conjugate points. The proof, based on the Morse theory of the energy functional, had crucial gaps pointed out by Anosov, who saved the theorem by providing a completely independent proof. In this article, we fill the gaps in the original argument, and thus provide a full Morse theoretic proof of Klingenberg's theorem. 
\tableofcontents
\end{abstract}

\maketitle

\section{Introduction}

The flow $\phi_t$ of a nowhere vanishing vector field $X$ on a closed connected manifold $N$ of dimension at least 3 is called \emph{Anosov} when its dynamics is hyperbolic, meaning that the tangent spaces admit splittings 
\[T_yN=\Es_y\oplus\Eu_y\oplus\langle X(y)\rangle,\qquad \forall y\in N,\] 
where $\Es_y$ and $\Eu_y$ are vector subspaces of positive dimension satisfying the following properties:
\begin{enumerate}[(i)]
\setlength{\itemsep}{3pt}
\item $d\phi_t(\Es)=\Es$ and $d\phi_t(\Eu)=\Eu$ for all $t\in\R$,

\item\label{i:Anosov2} $\|d\phi_t|_{\Es}\|\leq c e^{-t/c}$ and $\|d\phi_{-t}|_{\Eu}\|\leq c e^{-t/c}$ for all $t\geq0$, where $c\geq1$ is a constant independent of $t$, and $\|\cdot\|$ is a Riemannian norm on $N$.
\end{enumerate}
This class of dynamical systems was introduced by Anosov in his seminal work \cite{Anosov:1967aa}, where he also established its remarkable properties: ergodicity, a closing lemma, and structural stability, among others. Since then, Anosov flows have become of major interest in geometry and dynamics, see e.g.\ \cite{Katok:1995aa, Fisher:0aa}.

An \emph{Anosov Riemannian manifold} is a closed connected Riemannian manifold $(M,g)$ whose geodesic flow is Anosov. We recall that the geodesic flow $\phi_t$ is the flow on the unit tangent bundle $SM=\big\{v\in TM\ \big|\ \|v\|_g=1\big\}$ whose orbits have the form $\phi_t(\dot\gamma(0))=\dot\gamma(t)$, where $\gamma:\R\to M$ is any geodesic parametrized with unit speed $\|\dot\gamma\|_g\equiv1$.  
A major result in the above mentioned work \cite{Anosov:1967aa} asserts that closed connected Riemannian manifolds of negative sectional curvature are Anosov. While this curvature condition is sufficient to guarantee the hyperbolicity, it turns out to be not necessary: Eberlein \cite{Eberlein:1973aa} constructed an example of an Anosov Riemannian manifold whose sectional curvature vanishes on a non-empty open set, and thanks to the structural stability one can then perturb the Riemannian metric to obtain an Anosov Riemannian manifold whose sectional curvature attains positive values. Even more surprisingly, Donnay and Pugh \cite{Donnay:2003aa} constructed an Anosov Riemannian surface isometrically embedded in the Euclidean 3-space (and thus having somewhere positive Gaussian curvature).

While negative curvature is not a necessary property for the Anosov condition, it turns out that the geometry of a general Anosov Riemannian manifold is very similar to that of closed Riemannian manifolds of negative sectional curvature, due to the following result of Klingenberg \cite{Klingenberg:1974aa}. We recall that a closed Riemannian manifold $(M,g)$ is \emph{without conjugate points} when its geodesic flow $\phi_t$ satisfies \begin{align*}
 V(\phi_t(v))\cap d\phi_t(v)V(v) = \{0\},
 \qquad
 \forall v\in SM,\ t\in\R\setminus\{0\},
\end{align*}
where $V=\ker(dp)$ is the vertical distribution, and $p:SM\to M$ is the base projection of the unit tangent bundle.

\begin{MainThm}[Klingenberg]
\label{mt:Klingenberg}
On any Anosov Riemannian manifold, the following assertions hold.
\begin{enumerate}[$(i)$]
\setlength{\itemsep}{3pt}

\item\label{i:Klingenberg_1} No closed geodesic is contractible.

\item\label{i:Klingenberg_2} Every homotopy class of non-contractible loops contains a unique closed geodesic.

\item\label{i:Klingenberg_3} There are no conjugate points.

\end{enumerate}
\end{MainThm}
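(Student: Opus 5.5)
The plan is to follow Klingenberg's original Morse-theoretic strategy on the free loop space $\Lambda M$ with the energy functional $E(\gamma)=\int_0^1\|\dot\gamma\|_g^2\,dt$, whose critical points are the closed geodesics together with the constant loops. The Anosov condition enters through two dynamical consequences. First, every closed geodesic $\gamma$ is hyperbolic as a periodic orbit of $\phi_t$, so its linearized Poincaré map has no eigenvalue on the unit circle. It follows that $\gamma$ and all its iterates are nondegenerate as critical circles of $E$; more precisely, the $S^1$-orbit is a nondegenerate critical manifold. Second, the Morse index of the iterates $\gamma^m$ grows like $m\cdot\mathrm{ind}(\gamma)$ with no splitting numbers (Bott's iteration formula: hyperbolicity kills all contributions from eigenvalues on the unit circle). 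In particular $\mathrm{ind}(\gamma^m)=m\,\mathrm{ind}(\gamma)$. Moreover, by the Anosov splitting, the stable and unstable bundles along $\gamma$ are Lagrangian subbundles transverse to the vertical. One should be able to show that the Morse index of $\gamma$ equals a Maslov index of $E^s$ (or $E^u$) relative to $V$ along the orbit, and that this index is constrained.

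I would prove (iii) first and deduce (i) and (ii) from it. Suppose there is a pair of conjugate points along some unit-speed geodesic segment. Conjugate points are an open condition in the sense that a conjugate pair can be pushed strictly inside the segment, so the Jacobi index form on a slightly longer segment is negative somewhere. By the Anosov closing lemma (and density of periodic orbits in $SM$ for transitive Anosov geodesic flows), one obtains a closed geodesic $\gamma$ that shadows this segment, hence has positive Morse index. Taking iterates, $\mathrm{ind}(\gamma^m)=m\,\mathrm{ind}(\gamma)\to\infty$, while every $\gamma^m$ is nondegenerate and hyperbolic. The task is then to contradict this. The approach I would try is an $S^1$-equivariant Morse-theoretic count in the component of $\Lambda M$ containing $\gamma$, in the spirit of the Gromoll–Meyer local homology of iterates. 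The local $S^1$-equivariant homology of the critical circle of $\gamma^m$ is concentrated in degree $\mathrm{ind}(\gamma^m)$, with rational coefficients, provided the orientation/nullity conditions hold; hyperbolicity should give this. One then uses growth of the number of closed geodesics in a free homotopy class together with the uniqueness in each class coming from the Anosov structure (hyperbolicity makes periodic orbits isolated in their homotopy class via shadowing/expansivity). This should produce a class whose homology is too small to carry critical points of arbitrarily large index, contradicting the minimax characterization.

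For (ii), in a non-contractible free homotopy class the energy attains its minimum at a closed geodesic. If there were two geodesics, or a critical point of positive index, then the absence of conjugate points (iii) gives index zero for all closed geodesics. Indeed, the Morse index of a closed geodesic is bounded by the number of conjugate points along its lift plus a bounded correction, and that correction vanishes via the transversality of $E^s$ with $V$. Since every critical point then has index zero and is nondegenerate, a mountain-pass argument in the connected component forces exactly one critical circle. For (i), a contractible closed geodesic would lie in the component of constant loops, whose only index-zero critical manifold is $M$ itself. The same mountain-pass argument between the constants and the nondegenerate index-zero geodesic then yields a critical point of positive index, which is impossible.

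The main obstacle is the core step in (iii): passing from a hypothetical conjugate point to a genuine contradiction. The closing lemma does give closed geodesics with large index, but one needs a topological obstruction to infinitely many such critical points. This is exactly where Klingenberg's argument had gaps (the nondegeneracy of iterates and the behavior of local equivariant homology under iteration). I would try first to establish that along any closed orbit, $\mathrm{ind}(\gamma)$ equals the Maslov-type index of $E^s$ versus $V$. I would then show this index is invariant along the flow and bounded by continuity of $E^s$ over compact $SM$, uniformly in the period. That bound contradicts linear growth $m\,\mathrm{ind}(\gamma)$ unless $\mathrm{ind}(\gamma)=0$.
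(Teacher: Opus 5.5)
\textbf{Gap: the core step for (iii) is missing, and your plan derives (i) and (ii) from it.} Once the closing lemma gives a hyperbolic closed geodesic $\gamma$ with $\ind(\gamma)>0$, nothing is yet contradicted. Closed geodesics of large index are not forbidden in themselves, and neither of your mechanisms provides an obstruction.

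Your uniform bound on the Maslov-type index of $\Es$ relative to $V$ does not exist. By Klingenberg's formula, $\ind(\gamma)=\sum_{t\in\R/\tau\Z}\dim(\Es_{\phi_t(v)}\cap V_{\phi_t(v)})$ counts the instants of non-transversality over the whole period. This count is additive in time, which is exactly why $\ind(\gamma^m)=m\,\ind(\gamma)$. Continuity of $\Es$ on the compact $SM$ gives no control on how many such instants a long orbit meets. A period-independent bound, applied to iterates, would already say $\ind(\gamma)=0$ for every closed geodesic, which is what has to be proved.

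The same circularity appears when you assert that $\Es,\Eu$ are transverse to $V$ ``by the Anosov splitting'', and again when you use this to get index zero in (ii). Hyperbolicity alone does not give this transversality; if it did, Klingenberg's formula would make all indices vanish and the theorem would reduce to a mountain-pass argument. It is known only a posteriori, from the absence of conjugate points.

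The Morse-theoretic count does not work either, for three reasons:
\begin{enumerate}[(a)]
\item Uniqueness of the closed geodesic in a free homotopy class is assertion (ii) itself. Expansivity and shadowing only give uniqueness of periodic orbits in a small neighborhood.
\item You do not know the topology of the relevant component of $\Lambda$.
\item Even small homology would not exclude critical circles of large index, since these can cancel in pairs of adjacent indices.
\end{enumerate}

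\textbf{What the paper uses instead: the index formula modulo $2$.} The paper compares the orientability of $\Es$ along closed orbits with that of the normal bundle. When $\dim M\geq3$ this uses injectivity of $\proj_*:\pi_1(SM)\to\pi_1(M)$ or regular homotopies; when $\dim M=2$ it uses orientability of $\Es|_{S_xM}$, which relies on reversibility of the geodesic flow. The outcome is that every closed geodesic has even index. This rules out the cancellations above and makes $E$ perfect.

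Min-max arguments then produce forbidden odd-index critical circles:
\begin{enumerate}[(1)]
\item A mountain pass gives a critical circle of index $1$. This excludes contractible index-$0$ geodesics, and a second index-$0$ circle in a non-contractible component.
\item Take a contractible closed geodesic $\gamma$ of minimal energy; it is primitive and spherical. The Fet-type argument uses the symmetry $\gamma\mapsto\overline\gamma$, namely $\Sigma_*=-\overline\Sigma_*$ in $\HS*(\Delta,\Lambda^0)$. This forces the nonzero class $\Sigma_*(k)+\overline\Sigma_*(k)$ to die at some energy level, producing a geodesic of index $\ind(\gamma)+1$. This proves (i).
\item From (i) one gets $\pi_n(M)=0$ for $n\geq2$, and $U\simeq S^1$ for every non-contractible component $U$. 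Perfectness plus the Gysin sequence then exclude positive-index geodesics in $U$, giving (ii).
\end{enumerate}

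Only at the end is (iii) deduced, and it is done by exactly your closing-lemma step. A conjugate point persists to nearby periodic orbits and forces positive index, whereas by (i)--(ii) every closed geodesic minimizes $E$ in its component. So your order of implications has to be reversed, and the missing obstruction is parity combined with min-max, not an index bound.
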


This fundamental theorem is of crucial importance in modern spectral rigidity theory, among other fields (see \cite{Guillarmou:2026aa,Lefeuvre:2025aa,Wilkinson:2026aa} and references therein). Klingenberg's proof was based on the Morse theory \cite{Milnor:1963aa} of the energy functional $E:\Lambda\to [0,\infty)$ on the free loop space $\Lambda=W^{1,2}(S^1,M)$.
The original article \cite{Klingenberg:1974aa}, as well as its rewriting in the later monograph \cite[Sec.~5.3]{Klingenberg:1978aa}, contained several inaccuracies, some of which were pointed out about a decade later by Anosov \cite{Anosov:1985aa}. The main issue stems from the invariance of the energy functional $E$ under the $S^1$ action $\tau\cdot\gamma:=\gamma(\tau+\cdot)$, where $\tau\in S^1$ and $\gamma\in\Lambda$. The space of non-trivial critical points $\crit^+(E):=\crit(E)\cap E^{-1}(0,\infty)$ consists of the 1-periodic closed geodesics $\gamma$ of the underlying Riemannian manifold. Any such $\gamma$ is never isolated in the critical set $\crit^+(E)$, since it belongs to a critical circle $S^1\cdot\gamma\subset\crit^+(E)$. In terms of Morse theory, the critical circle of a closed geodesic of Morse index $i$ and energy $c$ in an Anosov Riemannian manifold will generate a two-dimensional subspace in the rational homology of the pair $(\Lambda^{\leq c},\Lambda^{<c})$, namely a one-dimensional subspace in degree $i$ and another one-dimensional subspace in degree $i+1$. Here, $\Lambda^{\leq c}:=E^{-1}[0,c]$ and $\Lambda^{<c}:=E^{-1}[0,c)$. In his article, Klingenberg treats any closed geodesic of index $i$ as if it were an isolated critical point of the energy functional, thus generating only the one-dimensional subspace in degree $i$.

Fortunately, in the same article \cite{Anosov:1985aa}, Anosov saved Klingenberg's theorem by providing a new, beautiful, and totally independent proof of the result. He showed that the presence of a pair of conjugate points in an Anosov Riemannian manifold $(M,g)$ would allow to construct a closed hypersurface immersed in the unit tangent bundle $SM$ and transverse to all the orbits of the geodesic flow, something which is forbidden by the contact geometry of $SM$ and  Stokes' theorem. A few years later, Mañé \cite{Mane:1987aa} also provided a proof of Klingenberg's theorem along similar lines (it seems that Mañé was unaware of Anosov's paper, perhaps due to the delay in the appearance of the English translation of Anosov's original manuscript in Russian). Mañé even claimed that the same result holds if the Riemannian manifold is non-compact, complete, with a uniform lower bound on the sectional curvature, and under the suitable generalization of the Anosov condition in the non-compact setting. Actually, in the non-compact setting, Knieper \cite{Knieper:2002aa} found a gap in Mañé's proof  \cite[Prop.~II.2]{Mane:1987aa}, and provided a correct proof under some extra assumptions \cite{Knieper:2018aa}. A proof of the full Mañé theorem in the non-compact setting was given by Melo and Romaña in the preprint \cite{Melo:2020aa}.

In the present article, we provide a complete and rigorous Morse theoretic proof of Klingenberg's theorem, in the spirit of the original article \cite{Klingenberg:1974aa}. For this purpose, we work in the setting of $S^1$-equivariant Morse theory for the energy functional, first introduced by Hingston \cite{Hingston:1984aa} about a decade after Klingenberg's result. The details that we provide show that, despite the gaps, Klingenberg had the right approach for the proof of Theorem~\ref{mt:Klingenberg}. The outline of the argument is the following. 
The stable and unstable distributions $\Es$ and $\Eu$ of an Anosov Riemannian manifold force all closed geodesics to have even Morse indices. This prevents the existence of contractible closed geodesics of index 0; indeed, if a contractible closed geodesic $\gamma$ existed, one could set up a min-max procedure with the energy functional over the space of homotopies of loops going from $\gamma$ to a constant loop, and obtain another contractible closed geodesic of index 1. For the same reason, any homotopy class of non-contractible loops must contain a unique closed geodesic of index 0. In order to exclude the existence of contractible closed geodesics of positive index, we build on a beautiful idea that goes back to Fet \cite{Fet:1965aa}. Arguing by contradiction, we assume that there exist contractible closed geodesics, and consider one such $\gamma$ of minimal energy $c:=E(\gamma)>0$ and some index $i>0$. This gives two critical circles $S^1\cdot\gamma$ and $S^1\cdot\overline\gamma$, where $\overline\gamma=\gamma(-\cdot)$ is the same closed geodesic with reversed parametrization. For $\epsilon>0$ small enough, these two critical circles admit associated relative cycles $\Sigma$ and $\overline\Sigma$ spanning a two-dimensional subspace of the rational $S^1$-equivariant homology $\HS{i}(\Lambda^{<c+\epsilon},\Lambda^{<c})$, where $\Lambda^{<b}:=E^{-1}[0,b)$ for each $b>0$. It turns out that the relative cycle $\Sigma+\overline\Sigma$ vanishes in $\HS{i}(\Lambda^{<b},\Lambda^{<c})$ for a large enough $b>c$, and Morse theory implies that the infimum of the set of such values $b$ is the energy of a closed geodesic of index $i+1$, violating the allowed parity of the indices. Finally, an argument involving the homotopy long exact sequence of the fibration $\Lambda\to M$, $\gamma\mapsto\gamma(0)$ rules out the existence of non-contractible closed geodesics of positive index.

Unlike Anosov's and Mañé's arguments, the Morse theoretic one relies on the invariance of the energy functional $E$ under the orientation reversing $\Z_2$ action $\gamma\mapsto\overline\gamma$. While this is certainly a drawback, as it prevents the proof from being extended beyond the Riemannian case to Anosov geodesic flows of non-reversible Finsler metrics, we hope that the arguments help shed light on the still rather unexplored $\Z_2$ symmetry of the energy functional. Such symmetry may be the key for establishing the long-standing closed geodesics conjecture, which is known to fail in the non-reversible Finsler case: any closed Riemannian manifold of dimension at least two has infinitely many closed geodesics.

\subsection{Organization of the paper}
Given the troubled history of Klingenberg's theorem, we insisted on making this article self-contained.
In Section~\ref{s:preliminaries} we provide the background on the variational theory of closed geodesics that is needed for the proof of Theorem~\ref{mt:Klingenberg}. In Section~\ref{s:spherical_closed_geodesics} we introduce the notion of spherical closed geodesics, and provide a version of the original argument of Fet in the setting of $S^1$-equivariant Morse theory, asserting that, in a bumpy closed Riemannian manifold, the presence of spherical closed geodesics of index $i$ and energy $c$ forces the presence of a closed geodesic of index $i+1$ and energy above $c$. Finally, in Section~\ref{s:Anosov_closed_geodesics}, we provide the proof of Theorem~\ref{mt:Klingenberg}.

\subsection{Acknowledgments} 
The second author learned about Klingenberg's theorem from a conversation with Gerhard Knieper. We thank him for his interest in the present work and for his encouragement.

\section{Preliminaries}
\label{s:preliminaries}

The material in this section is standard, and can be mostly found in, e.g.\ \cite{Klingenberg:1978aa, Oancea:2015aa}.

\subsection{The free loop space}\label{s:setting}

Let $(M,g)$ be a closed connected Riemannian manifold.
We denote by $\Lambda:=W^{1,2}(S^1,M)$ its free loop space, where $S^1=\R/\Z$. The orthogonal group $O(2)=S^1\rtimes\Z_2$ acts on $S^1$, and therefore also on $\Lambda$ by
\begin{align*}
 (g\cdot\gamma)(t)=\gamma(g\cdot t),\qquad\forall g\in O(2),\ \gamma\in\Lambda,\ t\in S^1.
\end{align*}
More specifically, $S^1$ acts on $\Lambda$ by translations
\begin{align*}
\tau\cdot\gamma=\gamma(\tau+\cdot),
\qquad\forall\tau\in S^1,\ \gamma\in\Lambda,
\end{align*}
whereas $\Z_2=\{1,\rho\}$ acts on $\Lambda$ by time-reversal
\begin{align*}
 \rho\cdot\gamma=\overline\gamma:=\gamma(-\cdot),
 \qquad\forall\gamma\in\Lambda.
\end{align*}
We will mostly consider the actions of $S^1$ and $\Z_2$ separately.
We denote by $\Lambda^0$ the subspace of $\Lambda$ consisting of the constant loops. Notice that $\Lambda^0$ is diffeomorphic to $M$, and is the set of fixed points of the $O(2)$ action, or equivalently the set of fixed points of the $S^1$ action.

The multiplicity of a non-constant loop $\gamma\in\Lambda\setminus\Lambda^0$ is the positive integer
\begin{align*}
 \mul(\gamma)=\max\big\{ m\in\N=\{1,2,3,...\}\ \big|\ \gamma=\tfrac1m\cdot\gamma \big\}.
\end{align*}
The loop $\gamma$ is called \emph{primitive} when $\mul(\gamma)=1$. For each integer $m\geq1$, the $m$-th iterate of $\gamma\in\Lambda$ is the loop $\gamma^m=\gamma(m\,\cdot)\in\Lambda$, which has multiplicity 
\[\mul(\gamma^m)=m\,\mul(\gamma).\]

\subsection{The energy functional}

The energy functional $E:\Lambda\to [0,\infty)$ is given by 
\begin{align*}
E(\gamma)
=
\int_{S^1}
\|\dot\gamma(t)\|_g^2\,dt.
\end{align*}
The subspace of constant curves $\Lambda^0$ is precisely the subspace of global minimizers $E^{-1}(0)$. Aside from $\Lambda^0$, the remaining critical points
\[
\crit^+(E):=\crit(E)\setminus\Lambda^0=\crit(E)\cap E^{-1}(0,\infty)
\]
form the space of 1-periodic closed geodesics. We filter the free loop space $\Lambda$ by means of $E$, denoting
\begin{align*}
 \Lambda^{<c}:=E^{-1}[0,c),
 \qquad
 \Lambda^{\leq c}:=E^{-1}[0,c].
\end{align*}
Analogously, for each subset $U\subset\Lambda$, we denote
\begin{align*}
  U^{<c}:=\Lambda^{<c}\cap U,
 \qquad
 U^{\leq c}:=\Lambda^{\leq c}\cap U.
\end{align*}
The energy $E$ is $O(2)$-invariant, and any element in $\Lambda\setminus\Lambda^0=E^{-1}(0,\infty)$ has finite stabilizer. Actually, any closed geodesic $\gamma\in\crit^+(E)$ has finite stabilizer that is a subgroup of $S^1$.

\subsection{Non-degenerate closed geodesics}\label{ss:non_degenerate}

A closed geodesic $\gamma\in\crit^+(E)$ is called \emph{non-degenerate} when the kernel of the Hessian of $E$ at $\gamma$ is given by
\begin{align*}
\ker( d^2E(\gamma) ) = T_\gamma(S^1\cdot \gamma).
\end{align*}
Notice that, due to the $O(2)$-invariance of the energy functional $E$, if $\gamma$ is non-degenerate then any other point in its critical manifold $O(2)\cdot\gamma$ is non-degenerate as well.
A Riemannian manifold $(M,g)$ all of whose critical circles in $\crit^+(E)$ are non-degenerate is called \emph{bumpy}.

The Morse \emph{index} $\ind(\gamma)$ of a closed geodesic $\gamma\in\crit^+(E)$, which is always finite, is the maximal dimension of a vector subspace of $T_\gamma\Lambda$ over which $d^2E(\gamma)$ is negative-definite. If $\gamma$ is non-degenerate, the Morse-Bott lemma provides an $S^1$-invariant neighborhood of $S^1\cdot \gamma$ diffeomorphic to an open neighborhood $U$ of the zero-section in a Hilbert vector bundle $V=V^-\oplus V^+\to S^1\cdot\gamma$, where $V^-$ has rank $\ind(\gamma)$, while $V^+$ has infinite rank. In the local coordinates given by this diffeomorphism, the critical circle $S^1\cdot\gamma$ corresponds to the zero-section, and the energy $E$ is given by
\begin{align}
\label{e:Morse_Bott}
 E(v)=-\|v^-\|^2+\|v^+\|^2 + c,\qquad\forall v=(v^-,v^+)\in U,
\end{align}
where $c=E(\gamma)$.

\subsection{The anti-gradient flow}\label{ss:antigradient_flow}
The Riemannian metric $g$ induces an $O(2)$-invariant complete Riemannian metric $G$ on $\Lambda$ given by
\begin{align*}
 G(Y,Z)=\int_{S^1}\big( g(Y(t),Z(t))+g(\nablat Y,\nablat Z) \big)\,dt,
 \qquad\forall Y,Z\in T_\gamma\Lambda,
\end{align*}
where $\nablat$ denotes the Levi-Civita covariant derivative. We denote by $\nabla E$ the gradient of the energy functional with respect to $G$. While the free loop space $\Lambda$ is not compact (being infinite dimensional), it enjoys a weak compactness property, called the Palais-Smale condition, which is enough for variational arguments: any sequence $\gamma_n\in\Lambda$ with $E(\gamma_n)$ uniformly bounded and $\|\nabla E(\gamma_n)\|_G\to0$ admits a subsequence converging to a critical point of $E$.

The flow $\Psi_s$ of $-\nabla E$ is called the anti-gradient flow. Along the orbit going through any loop $\gamma\not\in\crit(E)$, the energy function $s\mapsto E(\Psi_s(\gamma))$ is strictly decreasing. Moreover, for each $c>0$ and for each open neighborhood $U\subset\Lambda$ of the set of critical points $\crit^+(E)\cap E^{-1}(c)$, there exists $\epsilon>0$ and $s>0$ such that
\begin{align*}
 \Psi_s(\Lambda^{<c+\epsilon})\subset \Lambda^{<c}\cup U.
\end{align*}

\subsection{The geodesic flow}

We consider the unit tangent bundle 
\[SM=\big\{v\in TM\ \big|\ \|v\|_g=1\big\},\] 
with its base projection $\proj:SM\to M$, 
and the geodesic flow $\phi_t:SM\to SM$, which is defined by 
\[\phi_t(v)=\dot\gamma_v(t),\] 
where $\gamma_v:\R\to M$ is the geodesic such that $\dot\gamma_v(0)=v$. We denote by $\lambda$ the canonical contact 1-form on $SM$, defined by 
\begin{align}
\label{e:contact_form}
\lambda_v=g(v,d\proj(v)\,\cdot\,).
\end{align}
Its kernel $\Xi=\ker(\lambda)\subset T(SM)$ is the canonical contact distribution of the unit tangent bundle. The geodesic vector field $X$ on $SM$, which is the infinitesimal generator of $\phi_t$, is the Reeb vector field of the contact form $\lambda$, i.e.
\[\lambda(X)\equiv1,\quad d\lambda(X,\cdot)\equiv0.\]
The geodesic flow satisfies $\phi_t^*\lambda=\lambda$, and in particular $d\phi_t(v)\Xi_v=\Xi_{\phi_t(v)}$. 

Another relevant distribution is the \emph{vertical} one
\begin{align*}
 V:=\ker(d\proj).
\end{align*}
For each $v\in SM$ and $t>0$ sufficiently small, we have a trivial intersection \[V(\phi_t(v))\cap d\phi_t(v)V(v)=\{0\}.\] 
If there exist larger values $t>0$ such that $V(\phi_t(v))\cap d\phi_t(v)V(v)\neq\{0\}$, the points $\gamma_v(0)$ and $\gamma_v(t)$ are said to be \emph{conjugate} along the geodesic path $\gamma_v:[0,t]\to M$. The classical Morse index theorem from Riemannian geometry \cite[Sec.~15]{Milnor:1963aa} implies that the index of any closed geodesic $\gamma\in\crit^+(E)$ is bounded from below by the number of time values $t\in(0,1)$ such that $\gamma(0)$ and $\gamma(t)$ are conjugate along $\gamma|_{[0,t]}$, counted with a suitable multiplicity, i.e.
\begin{align}
\label{e:index_bound_conjugate_points}
 \ind(\gamma)
 \geq
 \sum_{t\in(0,1)}
 \dim\big(V(\phi_{\tau t}(v))\cap d\phi_{\tau t}(v)V(v)\big),
\end{align}
where $\tau=E(\gamma)^{1/2}$ and $v=\dot\gamma(0)/\|\dot\gamma(0)\|_g$.

\subsection{The Morse index of hyperbolic closed geodesics}

Let $\gamma\in\crit^+(E)\cap E^{-1}(\tau^2)$ be a closed geodesic, and $v:=\dot\gamma(0)/\|\dot\gamma(0)\|_g\in SM$ its initial oriented unit tangent vector, so that $t\mapsto\phi_t(v)$ is the corresponding $\tau$-periodic orbit of the geodesic flow, i.e. 
\[
\phi_{\tau t}(v) = \dot\gamma(t)/ \|\dot\gamma(t)\|_g.
\]
The kernel of the Hessian $d^2E(\gamma)$ can be characterized in terms of the linearized geodesic flow: there is an isomorphism
\begin{align*}
\ker(d\phi_{\tau}(v)-I) & \overset{\cong}{\longrightarrow}\ker(d^2E(\gamma)),\\
w & \longmapsto J_w,
\end{align*}
where $J_w\in T_\gamma\Lambda$ is the 1-periodic Jacobi field along $\gamma$ defined by
\[
J_w(t)=d(\proj\circ\phi_{\tau t})(v)w.
\]
For $w=X(v)$, the corresponding Jacobi field is $J_w=\dot\gamma$. The closed geodesic $\gamma$ is non-degenerate if and only if $\ker(d\phi_{\tau}(v)-I)=\langle X(v)\rangle$, namely if and only if the restriction $d\phi_\tau(v)|_{\Xi_v}$ does not have the eigenvalue 1.

The closed geodesic $\gamma$ is said to be \emph{hyperbolic} when $d\phi_{\tau}(v)|_{\Xi_v}$ has no eigenvalues on the unit circle of the complex plane (in particular, the critical circle $S^1\cdot\gamma$ is non-degenerate). Under this condition, we have a splitting
\begin{align*}
 \Xi_v = \Es_v \oplus \Eu_v,
\end{align*}
where $\Es_v$ (resp.\ $\Eu_v$) is the direct sum of the generalized eigenspaces of $d\phi_{\tau}(v)|_{\Xi_v}$ corresponding to the eigenvalues of modulus less than 1 (resp.\ larger than 1). 
Since $d\phi_{\tau}(v)|_{\Xi_v}$ is a linear symplectic automorphism of the symplectic vector space $(\Xi_v,d\lambda_v)$, the subspaces $\Es_v$ and $\Eu_v$ have the same dimension
\begin{align}
\label{e:Es_Eu_same_dim}
 \dim(\Es_v)=\dim(\Eu_v)=\dim(M)-1.
\end{align}
For each $t\in\R/\tau\Z$, we set
\begin{align*}
\Es_{\phi_t(v)}=d\phi_t(v)\Es_v,
\qquad
\Eu_{\phi_t(v)}=d\phi_t(v)\Eu_v.
\end{align*}
The union of these vector spaces for varying $t$ form the stable bundle $\Es$ and the unstable bundle $\Eu$ over the $\tau$-periodic orbit $t\mapsto\phi_t(v)$, or equivalently over the circle $\R/\tau\Z$. By a theorem of Klingenberg \cite[Prop.~5]{Klingenberg:1974aa}, which is a hyperbolic version of the aforementioned Morse index theorem from Riemannian geometry \cite{Milnor:1963aa}, the index of $\gamma$ can be computed in terms of the stable bundle $\Es$ and of the vertical bundle $V$ as
\begin{align}
\label{e:index_hyperbolic}
 \ind(\gamma) = \sum_{t\in\R/\tau\Z} d_t,
\end{align}
where
\begin{align*}
d_t:=\dim\big(\Es_{\phi_t(v)}\cap V_{\phi_t(v)}\big).
\end{align*}
This formula has the following important consequence, first remarked by Klingenberg in the same article. We denote by $N\gamma\subset TM|_{\gamma}$ the normal bundle of the closed geodesic $\gamma$ seen as an immersed circle in $M$.

\begin{proposition}\label{p:index_parity}
The Morse index $\ind(\gamma)$ is even if and only if $\Es$ and $N\gamma$ are either both orientable or both unorientable vector bundles over the circle.
\end{proposition}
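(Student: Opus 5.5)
We will realize the index formula \eqref{e:index_hyperbolic} as an intersection number that detects the first Stiefel--Whitney class. Over the circle $\R/\tau\Z$ (the orbit $t\mapsto\phi_t(v)$), work inside the contact bundle $\Xi$ restricted to the orbit. Along the orbit, $\Xi$ splits as horizontal $\oplus$ vertical, and both summands are naturally identified with the normal bundle $N\gamma$: the map $d\proj$ sends $\Xi_{\phi_t(v)}$ onto $\dot\gamma(t)^\perp$, and the connector sends vertical vectors to $\dot\gamma(t)^\perp$. Thus $V\cap\Xi\cong N\gamma$, and the stable bundle $\Es\subset\Xi$ is a Lagrangian subbundle of rank $n-1$, where $n=\dim M$, by \eqref{e:Es_Eu_same_dim}.

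The key step is to compare $\Es$ with the horizontal Lagrangian $H\cong N\gamma$. Consider the projection $\pi:\Es\to H$ along $V$, i.e.\ the restriction of $d\proj$ to $\Es$, which is a bundle map $\Es\to N\gamma$. Its kernel at $\phi_t(v)$ is exactly $\Es_{\phi_t(v)}\cap V_{\phi_t(v)}$, of dimension $d_t$. Taking the top exterior power gives a section
\[
\det\pi\in\Gamma\big(\Lambda^{n-1}(\Es)^*\otimes\Lambda^{n-1}N\gamma\big)
\]
of a real line bundle $L$ over the circle, and $L$ is trivial if and only if $\Es$ and $N\gamma$ are both orientable or both non-orientable. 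The section vanishes exactly at the finitely many $t$ with $d_t>0$; finiteness follows from the index formula, since $\ind(\gamma)$ is finite.

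The parity statement then follows from the fact that a section of a line bundle over $S^1$ with finitely many zeros has a number of sign changes, counted along a loop, which is even if and only if the bundle is trivial. It remains to show that $\det\pi$ changes sign at a zero $t_0$ exactly when $d_{t_0}$ is odd. More precisely, $\det\pi$ should vanish to order exactly $d_{t_0}$, with a sign change governed by the parity of $d_{t_0}$. Once this is established, $\ind(\gamma)=\sum d_t\equiv$ (number of sign changes) mod 2, and the proposition follows.

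The main obstacle is this local transversality claim: the vertical Lagrangian is crossed transversally and monotonically by the moving Lagrangian $\Es_{\phi_t(v)}$. I would prove it using the standard positivity of the geodesic flow with respect to the vertical distribution. Write $\Es$ near $t_0$ as a graph over a Lagrangian complement. The crossing form of the path $t\mapsto d\phi_t V$ relative to $\Es$ (equivalently, of $\Es$ relative to $V$ after flowing backwards) is definite: in Jacobi-field terms, it is $\langle J',J'\rangle>0$ on the kernel. A definite crossing form of rank $d_{t_0}$ means that $\det\pi$ behaves like $(t-t_0)^{d_{t_0}}$ times a nonvanishing factor, which gives the required parity. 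This same positivity is also what underlies \eqref{e:index_hyperbolic}, so the claim is consistent with the way Klingenberg's formula is proved.
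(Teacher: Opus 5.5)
Your proposal is correct and follows essentially the same route as the paper. The paper also tracks the sign of $\det(d\proj|_{\Es})\colon \Es\to N\gamma$ along the orbit, using continuously extended orientations over $[t_0,t_0+\tau]$ with $d_{t_0}=0$ in place of your determinant line bundle, and relates sign changes at crossings with $d_t$ odd to the comparison of endpoint signs. The one difference is that the paper simply asserts the local sign-change property, whereas your crossing-form argument supplies a justification: the form $\pm|J'|^2$ is definite on $\Es\cap V$, which forces $\det\pi$ to vanish to order exactly $d_{t_0}$.
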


\begin{proof}
The linearized base projection $d\proj$ maps $\Es$ to $N\gamma$, and we denote its fiber restriction as
\begin{align*}
\Pi_t:=d\proj(\phi_t(v))|_{\Es_{\phi_t(v)}} :\Es_{\phi_t(v)}\to N_{\gamma(t/\tau)}\gamma,\qquad\forall t\in\R.
\end{align*}
Let $t_0\in[0,\tau)$ be any value such that $d_{t_0}=0$. 
Since 
\[\ker(\Pi_t)=\Es_{\phi_t(v)}\cap V_{\phi_t(v)},
\qquad
\dim(\Es_{\phi_t(v)})=\dim(N_{\gamma(t/\tau)}\gamma),\] 
the linear map $\Pi_{t_0}$ is an isomorphism. We fix an arbitrary orientation $o_{t_0}$ of $N_{\gamma(t_0/\tau)}\gamma$, and extend it continuously to a family of orientations $o_{t}$ of $N_{\gamma(t/\tau)}\gamma$ for all $t\in\R$. Notice that $o_{t_0}=o_{t_0+\tau}$ if and only if the normal bundle $N\gamma$ is orientable. Next, we fix an arbitrary orientation $u_{t_0}$ of $\Es_{\phi_{t_0}(v)}$, and extend it  continuously to a family of orientations $u_{t}$ of $\Es_{\phi_{t}(v)}$ for all $t\in\R$. Once again, $u_{t_0}=u_{t_0+\tau}$ if and only if the stable bundle $\Es$ is orientable.
We define $\sign(\Pi_t)$ to be the sign of the determinant of $\Pi_t$ with respect to any choice of oriented bases of $\Es_{\phi_t(v)}$ and $N_{\gamma(t/\tau)}\gamma$. Namely, 
\begin{align*}
\sign(\Pi_t)
=
\begin{cases}
 1,& \mbox{if $\Pi_t$ is orientation preserving},\\
 0,& \mbox{if $\Pi_t$ is non-invertible},\\
 -1,& \mbox{if $\Pi_t$ is orientation reversing}.
\end{cases}
\end{align*}
At each $t\in(t_0,t_0+\tau)$ such that $d_t\neq0$ we have that $\sign(\Pi_{t^-})=\sign(\Pi_{t^+})$ if and only if $d_t$ is even. Therefore, by the index formula~\eqref{e:index_hyperbolic}, we have that $\sign(\Pi_{t_0})=\sign(\Pi_{t_0+\tau})$ if and only if $\ind(\gamma)$ is even. Finally, $\sign(\Pi_{t_0})=\sign(\Pi_{t_0+\tau})$ if and only if $\Es$ and $N\gamma$ are either both orientable or both unorientable vector bundles.
\end{proof}

For any closed geodesic $\gamma\in\crit^+(E)$, the function $m\mapsto\ind(\gamma^m)$ is either identically zero or has linear growth. If $\gamma$ is hyperbolic, the index formula~\eqref{e:index_hyperbolic} actually implies that the function $m\mapsto\ind(\gamma^m)$ is exactly linear, i.e.
\[
\ind(\gamma^m)=m\,\ind(\gamma).
\]

On any Anosov Riemannian manifold, any closed geodesic $\gamma\in\crit^+(E)$ is hyperbolic. If $v:=\dot\gamma(0)/\|\dot\gamma(0)\|_g$, the stable and unstable bundles of the associated periodic orbit $t\mapsto\phi_t(v)$ of the geodesic flow are the restrictions of the ambient stable and unstable bundles $\Es,\Eu\subset T(SM)$ respectively. The Anosov condition implies that $\Es_v$ and $\Eu_v$ depend continuously (and even H\"older continuously) on $v\in SM$, and in particular their dimension is independent of $v$. Therefore 
\[\dim(\Es_v)=\dim(\Eu_v),\qquad \forall v\in SM,\] 
since this identity holds when $v$ lies on a closed orbit of the geodesic flow $\phi_t$, as was recalled in \eqref{e:Es_Eu_same_dim}.

\section{Spherical closed geodesics}
\label{s:spherical_closed_geodesics}

\subsection{Spherical maps seen in the free loop space}\label{ss:spherical}
Let $M$ be a connected closed manifold.
While the Sobolev free loop space $\Lambda=W^{1,2}(S^1,M)$ is well suited for the variational theory of the energy functional, in this subsection we also employ the continuous free loop space 
\[\Upsilon:=C^0(S^1,M),\] 
endowed with the $C^0$-topology and equipped with the action of $O(2)=S^1\rtimes\Z_2$ analogous to the one on $\Lambda$. We again set $\Upsilon^0$ to be the subspace of $\Upsilon$ consisting of the constant loops. Namely, $\Upsilon^0$ is the subset of fixed points of the whole $O(2)$ action, or equivalently of the whole $S^1$ action. The Sobolev embedding theorem guarantees a continuous inclusion $\Lambda\hookrightarrow\Upsilon$, and a standard regularization argument implies that this inclusion is a homotopy equivalence. Since $\Upsilon^0=\Lambda^0$, we actually have that the inclusion of pairs $(\Lambda,\Lambda^0)\hookrightarrow(\Upsilon,\Upsilon^0)$ is a homotopy equivalence.

For a positive integer $n\geq1$, consider the unit $n$-sphere $S^n$ as the equator of the unit $(n+1)$-sphere $S^{n+1}$. We write the ambient Euclidean space $\R^{n+2}$ as $\R^n\times\C$, so that we can use the complex notation in the last two coordinates. We equip this Euclidean space with the $S^1$ action 
\begin{align*}
 t\cdot(y,z)=(y,e^{i2\pi t}z),\qquad\forall t\in S^1,\ (y,z)\in \R^n\times\C,
\end{align*}
which preserves $S^{n+1}$. We identify the compact $n$-ball $B^n$ with the lower hemisphere of $S^n$, i.e.
\begin{align}
\label{eq:Bndef}
 B^n=\big\{(y,w,0)\in S^n\subset\R^n\times\R\times\{0\}\ \big|\ w\leq 0\big\}.
\end{align}
We have a disjoint union decomposition 
\[S^{n+1}=\partial B^n \sqcup S^1\cdot\interior(B^n).\]
The circle $S^1$ acts trivially on $\partial B^n$, whereas any point of $\interior(B^n)$ has trivial stabilizer for the $S^1$ action.

\begin{figure}
\begin{center}
\includegraphics{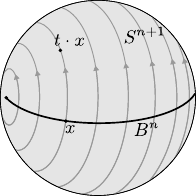}
\caption{The singular foliation that decomposes $S^{n+1}$ as a family of loops parametrized by the lower hemisphere $B^n$ of the equatorial sphere $S^n$.}
\label{f:sphere_decomposition}
\end{center}
\end{figure}

Any continuous map $\sigma:(B^n,\partial B^n)\to(\Upsilon,\Upsilon^0)$ can be equivalently described as a continuous map $\sigma_0:S^{n+1}\to M$, under the identity
\begin{align*}
\sigma_0(y,e^{i2\pi t}z):=\sigma(y,z)(t),\qquad\forall (y,z)\in B^n,\ t\in S^1,
\end{align*}
see Figure~\ref{f:sphere_decomposition}. This correspondence gives a homeomorphism
\begin{equation}\label{e:spherical_maps}
\begin{split}
C^0((B^n,\partial B^n),(\Upsilon,\Upsilon^0)) &\overset{\cong}{\longrightarrow} C^0(S^{n+1}, M),\\
\sigma & \longmapsto \sigma_0. 
\end{split}
\end{equation}
While the restriction of this map to $C^0((B^n,\partial B^n),(\Lambda,\Lambda^0))$ is not a homeomorphism, we have at least the following.

\begin{lemma}\label{l:spherical_maps_Lambda}
The map
\begin{align*}
C^0((B^n,\partial B^n),(\Lambda,\Lambda^0)) & \longrightarrow C^0(S^{n+1}, M),\\
\sigma & \longmapsto \sigma_0
\end{align*}
is a homotopy equivalence.
\end{lemma}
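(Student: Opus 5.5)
The map factors as the composition
\[
C^0((B^n,\partial B^n),(\Lambda,\Lambda^0))\xrightarrow{\ \iota_*\ } C^0((B^n,\partial B^n),(\Upsilon,\Upsilon^0))\xrightarrow{\ \cong\ } C^0(S^{n+1},M),
\]
where $\iota:(\Lambda,\Lambda^0)\hookrightarrow(\Upsilon,\Upsilon^0)$ is the inclusion and the second arrow is the homeomorphism~\eqref{e:spherical_maps}. It therefore suffices to prove that $\iota_*$ is a homotopy equivalence. The plan is to build a homotopy inverse of $\iota$ \emph{as a map of pairs}, with all homotopies also maps of pairs, and then apply the functor $C^0((B^n,\partial B^n),-)$. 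This functor takes homotopies of pairs to homotopies of mapping spaces, since a homotopy $H:[0,1]\times(\Upsilon,\Upsilon^0)\to(\Upsilon,\Upsilon^0)$ induces $(s,\sigma)\mapsto H_s\circ\sigma$. Continuity of this induced homotopy on the compact-open mapping spaces is routine because $B^n$ is compact.

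The regularization I would use is $S^1$-equivariant and fixes constant loops. Embed $M\subset\R^N$ with a tubular neighborhood retraction $r:\mathcal U\to M$. Given $\gamma\in\Upsilon$, convolve with a mollifier $\eta_\delta$ on $S^1$ to get $\gamma*\eta_\delta$. For $\delta$ small depending on the modulus of continuity of $\gamma$, this lies in $\mathcal U$, so we may set $R_\delta(\gamma):=r(\gamma*\eta_\delta)$, a smooth loop. The uniform choice of $\delta$ is the main obstacle, since $\Upsilon$ is not locally compact. I would handle it in one of two ways. The first is to use a continuous function $\delta(\gamma)>0$, built by a partition of unity on the metrizable space $\Upsilon$, such that $\gamma*\eta_{s\delta(\gamma)}\in\mathcal U$ for all $s\in[0,1]$. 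The second, simpler way is to work with maps $\sigma$ from the compact $B^n$: the image $\sigma(B^n)$ is compact, hence equicontinuous, so a single $\delta$ works. In either case define $R(\gamma):=r(\gamma*\eta_{\delta(\gamma)})\in\Lambda$, which is continuous from $\Upsilon$ to $\Lambda$ because convolution is smoothing.

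Constant loops are preserved, since $c*\eta_\delta=c$ and $r(c)=c$, so $R$ is a map of pairs. The homotopy $H_s(\gamma)=r(\gamma*\eta_{s\delta(\gamma)})$, with $H_0=\mathrm{id}$, then gives $\iota\circ R\simeq\mathrm{id}_\Upsilon$ through maps preserving $\Upsilon^0$. The same formula restricted to $\Lambda$ gives $R\circ\iota\simeq\mathrm{id}_\Lambda$ through maps preserving $\Lambda^0$. For this I need continuity at $s=0$ in the $W^{1,2}$ topology, which holds because $\gamma*\eta_\delta\to\gamma$ in $W^{1,2}$ locally uniformly for $\gamma$ in $W^{1,2}$. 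I also need $\delta(\gamma)$ chosen continuously for the $W^{1,2}$ topology on $\Lambda$, which is fine because the $W^{1,2}$ topology is finer than the $C^0$ topology.

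Composing with~\eqref{e:spherical_maps} then shows that $\sigma\mapsto\sigma_0$ has homotopy inverse $f\mapsto R\circ(\text{the }\sigma\text{ corresponding to }f)$. If the continuous choice of $\delta(\gamma)$ proves delicate, the fallback is weaker: the pair-inclusion is a weak homotopy equivalence, by applying the compact-image argument to maps of spheres and disks into the mapping spaces. Both mapping spaces have the homotopy type of CW complexes, since they are mapping spaces from compacta into ANR pairs, so Whitehead's theorem would upgrade this to a homotopy equivalence.
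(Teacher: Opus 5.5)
Your proposal is correct and follows the same route as the paper. The paper also factors the map through the pair inclusion $(\Lambda,\Lambda^0)\hookrightarrow(\Upsilon,\Upsilon^0)$, takes a homotopy inverse of pairs $\psi$, uses $\sigma\mapsto\psi\circ\sigma$ as the inverse on mapping spaces, and then composes with the homeomorphism onto $C^0(S^{n+1},M)$. The only difference is that you construct $\psi$ explicitly, by mollification at a continuously chosen scale $\delta(\gamma)$ that fixes constant loops, whereas the paper takes the existence of this pair homotopy inverse from the standard regularization argument.
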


\begin{proof}
Let $\psi:(\Upsilon,\Upsilon^0)\to(\Lambda,\Lambda^0)$ be a homotopy inverse of the inclusion $(\Lambda,\Lambda^0)\hookrightarrow(\Upsilon,\Upsilon^0)$. This induces a continuous map 
\begin{align*}
\Psi:C^0((B^n,\partial B^n),(\Upsilon,\Upsilon^0)) \to C^0((B^n,\partial B^n),(\Lambda,\Lambda^0)) 
\end{align*}
given by $\Psi(\sigma)(x)=\psi(\sigma(x))$, which is a homotopy inverse of the inclusion 
\[
C^0((B^n,\partial B^n),(\Lambda,\Lambda^0))\longhookrightarrow C^0((B^n,\partial B^n),(\Upsilon,\Upsilon^0)).
\]
By composing this latter inclusion with the homeomorphism \eqref{e:spherical_maps}, we infer the lemma.
\end{proof}

\begin{corollary}\label{c:spherical_maps}
If the homotopy group $\pi_{n+1}(M)$ is non-trivial, then there exists $\sigma:(B^n,\partial B^n)\to(\Lambda,\Lambda^0)$ that is not homotopic as map of pairs to a map with image contained in $\Lambda^0$.
\end{corollary}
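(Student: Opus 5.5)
We will argue by contraposition, using Lemma~\ref{l:spherical_maps_Lambda} to reduce the statement to one about maps $S^{n+1}\to M$. Since $\pi_{n+1}(M)\neq0$, we fix a continuous map $f:S^{n+1}\to M$ that is not null-homotopic, in the free sense. Such a map exists: a non-trivial element of $\pi_{n+1}(M,x_0)$ can only be freely null-homotopic if it is trivial, because for spheres a free null-homotopy can be converted into a based one. Indeed, a map $S^{n+1}\to M$ is freely null-homotopic if and only if it extends to $B^{n+2}$, and such an extension yields a based null-homotopy.

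Lemma~\ref{l:spherical_maps_Lambda} says that the map $\sigma\mapsto\sigma_0$ is a homotopy equivalence. In particular it induces a bijection on path components. Hence there exists $\sigma\in C^0((B^n,\partial B^n),(\Lambda,\Lambda^0))$ such that $\sigma_0$ lies in the same path component of $C^0(S^{n+1},M)$ as $f$, i.e.\ $\sigma_0$ is homotopic to $f$. Note that a path in the mapping space $C^0((B^n,\partial B^n),(\Lambda,\Lambda^0))$ is the same as a homotopy of maps of pairs. Here we use that $B^n$ is compact, so that the compact-open topology makes paths correspond to continuous maps on $[0,1]\times B^n$.

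Suppose now, for contradiction, that $\sigma$ is homotopic as a map of pairs to some $\sigma'$ whose image is contained in $\Lambda^0$. Then $\sigma$ and $\sigma'$ lie in the same path component, so $\sigma_0\simeq\sigma'_0$. But each loop $\sigma'(y,z)$ is constant, so
\[
\sigma'_0(y,e^{i2\pi t}z)=\sigma'(y,z)(0)
\]
is independent of $t$. It therefore factors through the quotient map $S^{n+1}\to S^{n+1}/S^1$, and this quotient is homeomorphic to $B^n$, which is contractible. Indeed, the orbit space of the decomposition $S^{n+1}=\partial B^n\sqcup S^1\cdot\interior(B^n)$ is $B^n$, via the continuous bijection $(y,z)\mapsto(y,|z|)$ from a compact space to a Hausdorff one. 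Consequently $\sigma'_0$ is null-homotopic, and so is $f\simeq\sigma_0\simeq\sigma'_0$, a contradiction.

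The only delicate points are bookkeeping. First, one must check that the free versus based distinction is harmless; this is handled by the remark on spheres in the first paragraph. Second, one must identify the homotopy of maps of pairs with a path in the mapping space; this is handled by the compactness of $B^n$. No analytic input beyond Lemma~\ref{l:spherical_maps_Lambda} is needed.
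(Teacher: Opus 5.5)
Your proof is correct and follows the paper's argument. You use the bijection on path components from Lemma~\ref{l:spherical_maps_Lambda} to realize a non-null-homotopic map $S^{n+1}\to M$ as some $\sigma_0$, and then use the contractibility of $B^n$ to show that a map of pairs into $\Lambda^0$ yields a null-homotopic $\sigma_0$. The differences are cosmetic: you factor $\sigma'_0$ through the orbit space $S^{n+1}/S^1\cong B^n$, whereas the paper first contracts $\sigma'$ inside $\Lambda^0$ to a constant loop, and you spell out the free-versus-based and path-versus-homotopy bookkeeping that the paper leaves implicit.
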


\begin{proof}
The non-vanishing of $\pi_{n+1}(M)$ implies that there exists a continuous map $\sigma_0:S^{n+1}\to M$ that is not homotopic to a constant. By Lemma~\ref{l:spherical_maps_Lambda}, up to replacing $\sigma_0$ with a homotopic map, we can assume that $\sigma_0$ is the image of some $\sigma\in C^0((B^n,\partial B^n),(\Lambda,\Lambda^0))$ under the map \eqref{e:spherical_maps}. If $\sigma$ were homotopic as map of pairs to a map with image contained in $\Lambda^0$, then it would also be homotopic as map of pairs to a constant map identically equal to a constant loop $\gamma\equiv y$. But this would imply that $\sigma_0$ is homotopic to the constant map identically equal to $y$.
\end{proof}

\begin{remark}\label{r:based_loop_space}
In topology, a version of Lemma~\ref{l:spherical_maps_Lambda} is usually stated for the based loop space
\begin{align*}
 \Omega_x:=\big\{ \gamma\in\Lambda\ |\ \gamma(0)=x \big\},
\end{align*}
where $x\in M$ is a basepoint. For each $\gamma\in\Omega_x$, a map analogous to \eqref{e:spherical_maps} induces an isomorphism of homotopy groups
\begin{align*}
 \pi_n(\Omega_x,\gamma) \overset{\cong}{\longrightarrow} \pi_{n+1}(M,x),
 \qquad\forall n\geq0.
\end{align*}
Since $M$ is connected, the homotopy type of $\Omega_x$ is independent of the choice of the basepoint $x$, which for this reason is occasionally omitted from the notation and from the homotopy groups.
The spaces $\Omega_x$, $\Lambda$, and $M$ are related by the fibration 
\[\ev:\Lambda\to M,\qquad\ev(\gamma)=\gamma(0),\]
whose fibers are $\ev^{-1}(x)=\Omega_x$. This fibration induces the long exact sequence of homotopy groups
\begin{align*}
 ...
 \overset{\ev_*\ }{\longrightarrow}
 \pi_{n+1}(M)
 \overset{\partial_*}{\longrightarrow}
 \pi_{n}(\Omega_x)
 \overset{\iota_*}{\longrightarrow}
 \pi_{n}(\Lambda)
 \overset{\ev_*\ }{\longrightarrow}
 \pi_{n}(M)
 \overset{\partial_*}{\longrightarrow}
 ...
 \overset{\iota_*}{\longrightarrow}
 \pi_{0}(\Lambda)
 \overset{\ev_*\ }{\longrightarrow}
 \pi_{0}(M)
\end{align*}
where $\iota_*$ is induced by the inclusion and $\partial_*$ is a connecting homomorphism. 
\end{remark}

\subsection{Descending manifolds}\label{ss:descending_manifolds}

In the following, we denote by $\HS*(\cdot)$ the $S^1$-e\-qui\-var\-i\-ant homology with coefficients in the field of rational numbers $\Q$. We recall that, for a topological space $Y$ equipped with an $S^1$ action, the Borel quotient is defined by $Y_{S^1}:=(Y\times ES^1)/S^1$. Here, $ES^1$ can be any contractible space equipped with a free $S^1$ action (for instance the unit sphere in an infinite-dimensional complex Hilbert space), and the circle $S^1$ acts diagonally on the product $Y\times ES^1$. The $S^1$-equivariant homology of $Y$ is defined by $\HS*(Y):=H_*(Y_{S^1})$, where $H_*(\cdot)$ is the ordinary singular homology with coefficients in $\Q$. If $S^1$ acts freely on $Y$, the quotient projection $Y_{S^1}\to Y/S^1$ is a homotopy equivalence. In particular, considering the canonical action of $S^1$ on itself, the Borel quotient $S^1_{S^1}$ is homotopy equivalent to the singleton $S^1/S^1$, namely $S^1_{S^1}$ is contractible.

Let $\gamma\in\crit^+(E)$ be a non-degenerate closed geodesic of energy $E(\gamma)=c$ and positive index $n=\ind(\gamma)>0$. A \emph{descending manifold} for $\gamma$ is a continuous map $\sigma:B^n\to\Lambda$ such that 
\begin{enumerate}[(i)]
\setlength{\itemsep}{3pt}
 
 \item $\sigma(x_0)=\gamma$ for some $x_0\in\interior(B^n)$,

 \item\label{i:desc2} $\sigma$ is a smooth embedding near $x_0$, and the Hessian $d^2E(\gamma)$ is negative definite on the image of $d\sigma(x_0)$, i.e.
 \[
 d^2E(\gamma)[d\sigma(x_0)v,d\sigma(x_0)v]<0,\quad\forall v\in T_{x_0}B^n\setminus\{0\},
 \]

 \item $\sigma(B^n\setminus\{x_0\})\subset\Lambda^{<c}$.
 
\end{enumerate}

\begin{example}
In the coordinates given by Morse-Bott lemma (Section~\ref{ss:non_degenerate}), an example of descending manifold is given by the inclusion of any compact $n$-ball in $V^-$ containing the origin in its interior.
\end{example}

\begin{remark}
Let $\rho$ be the generator of $\Z_2$, which acts on the free loop space $\Lambda$ by reversing the parametrization of every loop. If $\sigma$ is a descending manifold of $\gamma$, then $\overline\sigma:=\rho\cdot\sigma$ is a descending manifold of the reversed closed geodesic $\overline\gamma:=\rho\cdot\gamma$.
\end{remark}

Consider the product 
\[W:=B^n\times S^1,\] 
equipped with the $S^1$ action 
\[\tau\cdot(x,t):=(x,t+\tau),
\qquad\forall (x,t)\in W,\ \tau\in S^1.\] 
Its Borel quotient $W_{S^1}=B^n\times S^1_{S^1}$ is homotopy equivalent to $B^n$, and analogously $\partial W_{S^1}=\partial B^n\times S^1_{S^1}$ is homotopy equivalent to $\partial B^n$. Therefore
\begin{align}\label{e:HSW_HB}
\HS{p}(W,\partial W)
\cong
H_{p}(B^n,\partial B^n)
\cong
\begin{cases}
\Q,&\mbox{if }p=n,\\
0,&\mbox{otherwise}.
\end{cases}
\end{align}
Assume that $\gamma$ is primitive, i.e.
\begin{align*}
 \mul(\gamma)=1.
\end{align*}
We extend $\sigma$ to an $S^1$-equivariant continuous map
\begin{align*}
 \Sigma:W\to\Lambda,\qquad\Sigma(x,\tau)=\tau\cdot\sigma(x),
\end{align*}
and consider the image $\Sigma(W)$. The circle $S^1$ might not act freely on the whole $\Sigma(W)$, but it certainly acts freely at least on some $S^1$-invariant tubular neighborhood $U\subset\Lambda$ of the critical circle $S^1\cdot\gamma$, since $\gamma$ is primitive. Such a $U$ is homeomorphic to a neighborhood of the zero-section in a vector bundle $V=V^-\oplus V^+$ over the critical manifold $S^1\cdot\gamma$, and the Morse-Bott lemma allows to write $E|_U$ as a fiberwise quadratic form \eqref{e:Morse_Bott}. Let $Z\subset W$ be an $S^1$-invariant tubular neighborhood of $C:=\{x_0\}\times S^1$ that is small enough so that $\Sigma$ restricts to a smooth embedding $\Sigma|_Z:Z\hookrightarrow U$. Notice that $\Sigma(Z)$ has the structure of a trivial fiber bundle over the critical manifold $S^1\cdot\gamma$, with the zero-section $\Sigma(C)=S^1\cdot\gamma$ and complement $\Sigma(Z\setminus C)\subset\Lambda^{<c}$. Moreover, by property (\ref{i:desc2}) in the definition of descending manifold, $\Sigma(Z)$ is a graph over $V^-$.
Therefore $\Sigma$ induces the $S^1$-equivariant homology isomorphism
\begin{align*}
 \HS*(Z,\partial Z)
 \ttoup^{\Sigma_*}_{\cong}
 \HS*(\Lambda^{<c}\cup S^1\cdot\gamma,\Lambda^{<c}).
\end{align*}
This, together with $\Sigma(W\setminus C)\subset\Lambda^{<c}$, implies that  $\Sigma$ induces the $S^1$-equivariant homology isomorphism
\begin{align*}
 \HS*(W,\partial W)
 \ttoup^{\Sigma_*}_{\cong}
 \HS*(\Lambda^{<c}\cup S^1\cdot\gamma,\Lambda^{<c}).
\end{align*}
The inclusion
\begin{align*}
 (\Lambda^{<c}\cup \bigcup_{S^1\cdot\zeta} S^1\cdot\zeta,\Lambda^{<c})
 \longhookrightarrow
 (\Lambda^{\leq c},\Lambda^{<c}),
\end{align*}
with the union ranging over all critical manifolds $S^1\cdot\zeta\subset\crit^+(E)\cap E^{-1}(c)$, is a homotopy equivalence whose homotopy inverse is the time-$s$ map $\Psi_s$ of the anti-gradient flow $E$, for any $s>0$ (see Section~\ref{ss:antigradient_flow}). This, together with the excision property of singular homology, implies that $\Sigma$ induces the $S^1$-equivariant homology monomorphism
\begin{align*}
 \HS*(W,\partial W)
 \eembup^{\Sigma_*}
 \HS*(\Lambda^{\leq c},\Lambda^{<c}).
\end{align*}

\subsection{Spherical closed geodesics}
We say that a closed geodesic $\gamma$ is \emph{spherical} when it is non-degenerate, has positive index $n=\ind(\gamma)>0$, and admits a descending manifold $\sigma:B^n\to\Lambda$ such that $\sigma(\partial B^n)\subset\Lambda^0$. Notice that this implies that $\gamma$ is a contractible loop in $M$, and the whole $\sigma$ has image in the connected component $\Delta$ of $\Lambda$ consisting of the contractible loops. We refer to such a $\sigma$ as a \emph{spherical descending manifold}. The reason for this terminology is that $\sigma$ can be described as a map $\sigma_0:S^{n+1}\to M$ as in Lemma~\ref{l:spherical_maps_Lambda}. The following lemma provides a sufficient condition for a closed geodesic to be spherical.

\begin{lemma}\label{l:shortest_contractible}
A non-degenerate contractible closed geodesic of positive index and minimal energy among all contractible closed geodesics is spherical.
\end{lemma}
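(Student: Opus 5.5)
Let $\gamma$ be the geodesic in the statement, with energy $c=E(\gamma)$ and index $n=\ind(\gamma)>0$. I will take a local descending manifold $\sigma_1:B^n\to\Lambda$ from the Morse-Bott coordinates, as in the Example above: a small compact $n$-ball in the fiber $V^-_\gamma$, centered at the origin. Its boundary satisfies $\sigma_1(\partial B^n)\subset\Lambda^{\le c-\delta}$ for some $\delta>0$, and it lies in the component $\Delta$ of contractible loops. The plan is to deform the boundary sphere into $\Lambda^0$ without entering the level $c$, and then to attach the deformation as a collar to $\sigma_1$.

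\emph{Step 1: $\Delta^{<c}$ deformation retracts onto $\Lambda^0$.} The only critical points of $E$ in $\Delta^{<c}$ are the constant loops, because $\gamma$ has minimal energy among contractible closed geodesics. I will use the anti-gradient flow $\Psi_s$ together with the Palais-Smale condition from Section~\ref{ss:antigradient_flow}. For any $b<c$, every trajectory starting in $\Delta^{\le b}$ has $E(\Psi_s(\cdot))$ decreasing to a critical value. That critical value must be $0$, since $\Delta^{\le b}$ contains no critical points of positive energy and the flow preserves $\Delta$. For the uniform statement I argue as follows. Palais-Smale gives $\|\nabla E\|_G\ge\eta>0$ on $\Delta\cap E^{-1}[\epsilon,b]$, so after a finite time $s_0$ the image $\Psi_{s_0}(\Delta^{\le b})$ lies in $\Lambda^{<\epsilon}$. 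For $\epsilon$ smaller than the injectivity radius squared, the standard retraction of short loops onto constant loops maps $\Lambda^{<\epsilon}$ into $\Lambda^0$. It contracts each loop to its basepoint along minimal geodesics, and it does not increase energy.

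\emph{Step 2: build the collar.} Restricted to the sphere $\partial B^n$, the composed deformation of Step 1 gives a homotopy $h:\partial B^n\times[0,1]\to\Lambda^{\le b}$ with $b=c-\delta$. It starts at $h_0=\sigma_1|_{\partial B^n}$ and ends at $h_1\in\Lambda^0$. I then write $B^n$ as a smaller ball $B'$ together with a collar $\partial B^n\times[0,1]$. On $B'$ I put a rescaled copy of $\sigma_1$, and on the collar I put $h$. The resulting map $\sigma$ agrees with $\sigma_1$ near the center $x_0$, so it is a smooth embedding there with negative definite Hessian. It satisfies $\sigma(B^n\setminus\{x_0\})\subset\Lambda^{<c}$, because the collar lies in $\Lambda^{\le c-\delta}$. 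Finally $\sigma(\partial B^n)=h_1(\partial B^n)\subset\Lambda^0$. Hence $\sigma$ is a spherical descending manifold.

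\emph{Main obstacle.} I expect the hard part to be Step 1. It needs a deformation of $\Delta^{\le b}$ that is continuous in the loop, so that it is uniform over the compact sphere. It also has to stay inside $\Delta$, which holds since $\Delta$ is a connected component and the deformation is continuous. The key input is the Palais-Smale condition: it excludes loops in $\Delta^{\le b}$ at which the gradient degenerates in the limit, because such loops would produce a contractible closed geodesic of energy at most $b<c$, contradicting minimality. Non-degeneracy of $\gamma$ enters only through the Morse-Bott coordinates used to build $\sigma_1$.
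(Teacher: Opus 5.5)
Your proof is correct and follows essentially the paper's route: minimality of $c$ (via Palais--Smale) lets the anti-gradient flow push the boundary sphere below any small energy level $\epsilon$, these short loops are contracted to their basepoints with the exponential map, and the result is glued in as a collar. The only difference is cosmetic: the paper applies $\Psi_s$ to the whole disk instead of putting the flow lines into the collar. One claim needs softening: the injectivity radius bound alone does not justify that the exponential contraction never increases energy. This does not matter, because for $\epsilon$ small enough a uniform Lipschitz bound on the radial contractions keeps all energies below $c$, which is all you need (the paper states only this weaker property).
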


\begin{proof}
Let $\gamma\in\crit^+(E)\cap\Delta$ be a non-degenerate contractible closed geodesic of positive index $\ind(\gamma)>0$ and such that $c:=E(\gamma)\leq E(\zeta)$ for any other contractible closed geodesic $\zeta\in\crit^+(E)\cap\Delta$. Let $\Psi_s$ be the anti-gradient flow of the energy functional $E$, and $\sigma:B^n\to\Delta$ a descending manifold for $\gamma$. For each $s>0$, $\Psi_s\circ\sigma$ is still a descending manifold for  $\gamma$. Since there are no contractible closed geodesics of energy in $(0,c)$, for each $\epsilon>0$ there exists $s=s(\epsilon)>0$ large enough so that
\begin{align*}
\max_{\partial B^n} E\circ\Psi_s\circ\sigma < \epsilon.
\end{align*}
We require $\epsilon$ to be smaller than the squared injectivity radius $\inj(M,g)^2$. This implies that, for every boundary point $x\in\partial B^n$, the corresponding loop 
\[\zeta_x:=\Psi_s\circ\sigma(x)\] 
has length smaller than the injectivity radius $\inj(M,g)$. Therefore, we can build a homotopy $\zeta_{x,r}\in\Delta$, for $r\in[0,1]$, by
\begin{align}
\label{e:zeta_r}
 \zeta_{x,r}(t)=\exp_{\zeta_x(0)}((1-r)\exp_{\zeta_x(0)}^{-1}(\zeta_x(t))),
\end{align}
where $\exp$ denotes the exponential map of the Riemannian manifold $(M,g)$. This homotopy starts at $\zeta_{x,0}=\zeta_x$ and ends at the constant loop $\zeta_{x,1}\equiv\zeta_x(0)$. Moreover, if we choose $\epsilon$ small enough, we have $E(\zeta_{x,r})<E(\gamma)$ for all $x\in\partial B^n$ and $r\in[0,1]$.

Now, let us realize the compact ball $B^n$ as the unit ball in $\R^n$, and let the origin $x_0=0\in B^n$ be the special point mapped to $\sigma(0)=\gamma$. We build a spherical descending manifold $\nu:B^n\to\Delta$ for $\gamma$ as follows. For each $x\in B^n$, we set
\begin{align*}
 \nu(\tfrac12 x):=\Psi_s\circ\sigma(x).
\end{align*}
For each $x\in\partial B^n$ and $r\in[0,1]$, we set
\[
\nu(\tfrac{r+1}2 x):=\zeta_{x,r}.
\qedhere
\]
\end{proof}

Assume now that $\gamma$ is primitive and spherical, and let $\sigma$ be a spherical descending manifold of $\gamma$.
We denote by $\overline\sigma:=\rho\cdot\sigma$ its image under the generator of the $\Z_2$ action on $\Lambda$, and by $\Sigma$ and $\overline\Sigma$ the corresponding $S^1$-equivariant extensions (see Section~\ref{ss:descending_manifolds}). Since $S^1\cdot\gamma$ and $S^1\cdot\overline\gamma$ are disjoint critical circles in the connected components $\Delta$ of contractible loops, the maps $\Sigma$ and $\overline\Sigma$ induce homomorphisms in $S^1$-equivariant relative homology whose images span a 2-dimensional vector subspace
\[\langle\im(\Sigma_*),\im(\overline\Sigma_*)\rangle\subseteq\HS*(\Delta^{\leq c},\Delta^{<c}).\]
If instead we see $\Sigma_*$ and $\overline\Sigma_*$ as homomorphisms towards $\HS*(\Delta,\Delta^{0})$, with $\Delta^0:=\Lambda^0$ being the subspace of constant loops, we have the following.

\begin{lemma}\label{l:Fet}
The maps $\Sigma$ and $\overline\Sigma$ induce opposite homomorphisms
\begin{align}
\label{e:same_homomorphism}
\Sigma_*=-\overline\Sigma_*
:
\HS*(W,\partial W)
\to
\HS*(\Delta,\Delta^{0}).
\end{align}
\end{lemma}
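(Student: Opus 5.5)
We reduce the statement to a question about maps of spheres via the correspondence \eqref{e:spherical_maps}. Since $\HS*(W,\partial W)\cong\Q$ is concentrated in degree $n$, it is enough to compare the images of a generator $[W]$, i.e.\ to show $\Sigma_*[W]=-\overline\Sigma_*[W]$ in $\HS{n}(\Delta,\Delta^0)$. The plan is to (a) exhibit a homotopy of maps of pairs $(B^n,\partial B^n)\to(\Delta,\Delta^0)$ between $\overline\sigma$ and $\sigma\circ r$, where $r:B^n\to B^n$ is an orientation-reversing diffeomorphism preserving $\partial B^n$; (b) extend it $S^1$-equivariantly; and (c) observe that $r\times\id$ acts by $-1$ on $\HS{n}(W,\partial W)$.

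For (a), note that $\overline\sigma$ corresponds, under \eqref{e:spherical_maps}, to the map $\overline\sigma_0=\sigma_0\circ c$. Here $c(y,z)=(y,\bar z)$ is complex conjugation on the last coordinate of $S^{n+1}\subset\R^n\times\C$. Indeed,
\[
\overline\sigma_0(y,e^{i2\pi t}z)=\sigma(y,z)(-t)=\sigma_0(y,e^{-i2\pi t}z),
\]
using that $z$ is real for $(y,z)\in B^n$. The map $c$ is a reflection, hence has degree $-1$.

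We can also realize a degree $-1$ self-map compatible with the foliation: let $r(y_1,\dots,y_n,w)=(-y_1,y_2,\dots,y_n,w)$, which preserves $B^n$ and $\partial B^n$, and set $R(y,z)=(-y_1,y_2,\dots,z)$ on $S^{n+1}$. Then $R$ is $S^1$-equivariant, $(\sigma\circ r)_0=\sigma_0\circ R$, and $R$ also has degree $-1$. Since $c$ and $R$ are both reflections of $S^{n+1}$, they are homotopic through maps $S^{n+1}\to S^{n+1}$ (e.g.\ they are conjugate by a rotation in $SO(n+2)$, which is connected). Hence $\sigma_0\circ c\simeq\sigma_0\circ R$ as maps $S^{n+1}\to M$. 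By Lemma~\ref{l:spherical_maps_Lambda}, which says that \eqref{e:spherical_maps} restricted to $\Lambda$-valued maps is a homotopy equivalence and so in particular induces a bijection on path components, $\overline\sigma$ and $\sigma\circ r$ are homotopic as maps of pairs $(B^n,\partial B^n)\to(\Lambda,\Lambda^0)$. The homotopy stays in $\Delta$ by connectedness.

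For (b), a homotopy of pairs $h_s$ extends to the $S^1$-equivariant homotopy $(x,\tau)\mapsto\tau\cdot h_s(x)$ of maps $(W,\partial W)\to(\Delta,\Delta^0)$. This gives $\overline\Sigma_*=\Sigma_*\circ(r\times\id)_*$ on $\HS*(W,\partial W)$.

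For (c), under the identification \eqref{e:HSW_HB} the map $(r\times\id)_*$ becomes $r_*$ on $H_n(B^n,\partial B^n)$, which is multiplication by $\deg r=-1$. Therefore $\overline\Sigma_*=-\Sigma_*$.

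The main obstacle is the homotopy-theoretic step in (a): one needs Lemma~\ref{l:spherical_maps_Lambda} to turn a homotopy of sphere maps into a homotopy of relative maps in $\Lambda$. The care required is in checking that \eqref{e:spherical_maps} intertwines time reversal with $c$ and precomposition by $r$ with $R$. The remaining steps are formal.
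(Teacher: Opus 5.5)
Your proposal is correct and is essentially the paper's argument. The paper homotopes $\sigma_0$ to $\sigma_0\circ r_\pi$ through the rotations $r_\theta$ in the $(x_1,x_{n+2})$-plane. Here $r_\pi=c\circ R$ is exactly the composition of your two reflections, so $r_\pi|_{B^n}=r$ and one gets $\sigma\simeq\overline\sigma\circ r$. From there the paper uses Lemma~\ref{l:spherical_maps_Lambda}, the $S^1$-equivariant extension of the homotopy, and $(r\times\id)_*=-\id$ on $\HS{n}(W,\partial W)$, just as you do. The only cosmetic difference is that you compare $\sigma_0\circ c$ with $\sigma_0\circ R$ by conjugating one reflection into the other, rather than rotating $\sigma_0$ itself.
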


\begin{proof}
As in Section~\ref{ss:spherical}, we consider the unit $n$-sphere $S^n$ as the equator of the unit $(n+1)$-sphere $S^{n+1}$, and the compact $n$-ball $B^n$ as the lower hemisphere of $S^n$ (see \eqref{eq:Bndef}). We denote by $r_\theta:S^{n+1}\to S^{n+1}$ the rotation by angle $\theta$ in the $(x_1,x_{n+2})$-plane, i.e.
\begin{align*}
r_{\theta}(x_1,...,x_{n+2})
=
(\cos(\theta)x_1-\sin(\theta)x_{n+2},x_2,...,x_{n+1},\cos(\theta)x_{n+2}+\sin(\theta)x_{1}).
\end{align*}
Notice that $r_0=\id$, and $r_\pi$ is an involution that preserves $B^n$ and reverses its orientation.

We see the spherical descending manifold $\sigma$ as a continuous map $\sigma_0:S^{n+1}\to M$ via the homeomorphism \eqref{e:spherical_maps}. We define the continuous homotopy 
$h_\theta:=\sigma_0\circ r_\theta$, which satisfies $h_0=\sigma_0$ and $h_\pi=\sigma_0\circ r_\pi=:\psi_0$. The continuous map of pairs $\psi:(B^n,\partial B^n)\to(\Delta,\Delta^0)$ associated with $\psi_0$ via the homeomorphism \eqref{e:spherical_maps} is given by
\begin{align*}
 \psi=\overline\sigma\circ r_\pi|_{B^n}.
\end{align*}
By Lemma~\ref{l:spherical_maps_Lambda}, $\sigma$ and $\psi$ are homotopic as maps of pairs. We consider the $S^1$-equivariant extension of $\psi$, which is the map
\[\Psi:(W,\partial W)\to(\Delta,\Delta^0),\qquad \Psi(x,\tau)=\tau\cdot\psi(x).\]
The $S^1$-equivariant extension of any homotopy from $\sigma$ to $\psi$ is a homotopy from $\Sigma$ to $\Psi$, and its existence implies that $\Sigma$ and $\Psi$ induce the same homomorphism
\begin{align}\label{e:Sigmas_Psis}
\Sigma_*=\Psi_*:
\HS*(W,\partial W)
\to
\HS*(\Delta,\Delta^{0}).
\end{align}
Notice that $\Psi=\overline\Sigma\circ R$, where $R:W\to W$ is the $S^1$-equivariant diffeomorphism 
$R(x,t)=(r_\pi(x),t)$.
Since $r_\pi$ reverses the orientation of $B^n$, it induces the homology isomorphism
\begin{align*}
r_{\pi*}=-\id_*:H_n(B^n,\partial B^n)\to H_n(B^n,\partial B^n).
\end{align*}
This, together with the isomorphism \eqref{e:HSW_HB}, implies
\begin{align*}
R_*=-\id_*:\HS{n}(W,\partial W)\to \HS{n}(W,\partial W).
\end{align*}
Therefore
\begin{align*}
\Psi_*=\overline\Sigma_*\circ R_*=-\overline\Sigma_*
:\HS*(W,\partial W)
\to
\HS*(\Delta,\Delta^{0}).
\end{align*}
This, together with~\eqref{e:Sigmas_Psis}, implies~\eqref{e:same_homomorphism}.
\end{proof}

Spherical primitive closed geodesics of index $n$ force the existence of further critical circles of contractible closed geodesics with $S^1$-equivariant local homology in degree $n+1$, as asserted by the following theorem, which essentially goes back to Fet \cite{Fet:1965aa}. For simplicity, we provide the statement for bumpy closed Riemannian manifolds (see Section~\ref{ss:non_degenerate}), but one could obtain a slightly weaker assertion by replacing the bumpy condition with the requirement that $\crit^+(E)\cap\Delta$ consists of isolated critical circles.

\begin{theorem}\label{t:Fet}
If $(M,g)$ is a bumpy closed Riemannian manifold, then for any spherical primitive closed geodesic $\gamma\in\crit^+(E)$ there exists a contractible closed geodesic $\zeta\in\crit^+(E)$ of energy $E(\zeta)>E(\gamma)$ and index $\ind(\zeta)=\ind(\gamma)+1$.
\end{theorem}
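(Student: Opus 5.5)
The argument is a min-max over the class $a:=\Sigma_*[W,\partial W]+\overline\Sigma_*[W,\partial W]$. This class lives in $\HS{n}(\Delta^{\leq c},\Delta^{<c})$, and it is nonzero there because the images of $\Sigma_*$ and $\overline\Sigma_*$ span a 2-dimensional subspace. Throughout, $n=\ind(\gamma)$, $c=E(\gamma)$, $\sigma$ is a spherical descending manifold of $\gamma$, and $\Sigma,\overline\Sigma$ are the equivariant extensions of $\sigma$ and $\overline\sigma$.

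\emph{Step 1: the class becomes trivial at high energy.} Since $\sigma$ and $\overline\sigma$ are spherical, both $\Sigma$ and $\overline\Sigma$ are maps of pairs $(W,\partial W)\to(\Delta,\Delta^0)$. By Lemma~\ref{l:Fet}, the image of $a$ in $\HS{n}(\Delta,\Delta^0)$ is zero. Equivariant singular homology has compact supports, so there is a finite level $b_0$ at which the relative cycle $\Sigma+\overline\Sigma$ already bounds in $(\Delta^{<b_0},\Delta^0)$.

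\emph{Step 2: move the relative subspace from $\Delta^0$ to $\Delta^{<c}$.} The class $a$ lives relative to $\Delta^{<c}$, not relative to $\Delta^0$. Consider the natural map
\[
j_b:\HS{n}(\Delta^{\leq c},\Delta^{<c})\to\HS{n}(\Delta^{<b},\Delta^{<c}).
\]
The relative cycle $\Sigma+\overline\Sigma$ has image in $\Delta^{\leq c}$ and boundary in $\Delta^0\subset\Delta^{<c}$. The bounding chain in $(\Delta^{<b_0},\Delta^0)$ from Step~1 therefore also shows $j_{b_0}(a)=0$. Set
\[
b^*:=\inf\{b>c : j_b(a)=0\}.
\]

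\emph{Step 3: $b^*$ is strictly above $c$.} I need $b^*\geq c+\epsilon$ for some $\epsilon>0$. By the deformation property of the anti-gradient flow, $\HS{*}(\Delta^{<c+\epsilon},\Delta^{<c})\cong\HS{*}(\Delta^{\leq c},\Delta^{<c})$ for small $\epsilon$, with no other critical values in $(c,c+\epsilon)$ by bumpiness and Palais–Smale. Since $a\neq0$, it follows that $j_b(a)\neq0$ for $b$ close to $c$.

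\emph{Step 4: identify a critical circle at level $b^*$.} Consider the long exact sequence of the triple $(\Delta^{<b^*+\epsilon},\Delta^{\leq b^*}\cup\cdots)$, or more simply of $(\Delta^{\leq b^*},\Delta^{<b^*},\Delta^{<c})$. Standard deformation arguments give two facts:
\[
j_{b}(a)\neq0 \text{ for } b\leq b^*, \qquad a \text{ dies in } \HS{n}(\Delta^{\leq b^*},\Delta^{<c}).
\]
So $a$, viewed in $\HS{n}(\Delta^{<b^*},\Delta^{<c})$, lies in the image of the boundary map from $\HS{n+1}(\Delta^{\leq b^*},\Delta^{<b^*})$. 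Hence this local group is nonzero, and $b^*$ is a critical value of $E$ on $\Delta$.

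By bumpiness, the critical set at level $b^*$ consists of finitely many non-degenerate critical circles (finitely many by Palais–Smale). Excision splits the local group as a direct sum of the local equivariant homologies $\HS{*}(\Lambda^{<b^*}\cup S^1\cdot\zeta,\Lambda^{<b^*})$. By the Morse–Bott lemma, each summand is the homology of the Thom space of $V^-$ over $S^1\cdot\zeta$ modulo the finite stabilizer $\Z_{\mul(\zeta)}$, with rational coefficients. It is therefore concentrated in degree $\ind(\zeta)$; when $V^-$ is not orientable, or the iterate action reverses its orientation, the summand vanishes. This is the same computation as in Section~\ref{ss:descending_manifolds}, extended to non-primitive $\zeta$. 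Consequently some $\zeta\in\Delta$ with $E(\zeta)=b^*>c$ has $\ind(\zeta)=n+1$.

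\emph{Main obstacle.} I expect Step~4 to be the hardest, for two reasons. First, the local equivariant homology of a non-primitive critical circle must be handled carefully: the rational isomorphism with the quotient must be justified, and I only need the vanishing outside degree $\ind(\zeta)$. Second, the infimum argument must be made precise. I would do this by showing that if no critical circle with nontrivial degree-$(n+1)$ local homology lies in $[c+\epsilon,b]$, then the flow $\Psi_s$ yields injectivity of $j_{b'}$ on $a$ across each critical level. Bumpiness keeps the critical values discrete below any bound, which lets me run an induction over the levels.
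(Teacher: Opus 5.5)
Your proposal is correct and follows essentially the same route as the paper. The paper likewise takes $h=\Sigma_*(k)+\overline\Sigma_*(k)\neq0$ in $\HS{n}(\Delta^{\leq c},\Delta^{<c})$, uses Lemma~\ref{l:Fet} to show it dies at high energy, defines the transition level $b$ as the infimum of the levels where it vanishes, and concludes $\HS{n+1}(\Delta^{\leq b},\Delta^{<b})\neq0$ from the long exact sequence of the triple $\Delta^{<c}\subset\Delta^{<b}\subset\Delta^{\leq b}$. Your extra care with compact supports, and with the rational local equivariant homology of possibly non-primitive critical circles being concentrated in degree $\ind(\zeta)$, only makes explicit steps that the paper leaves implicit.
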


\begin{proof}
We set $c=E(\gamma)$ and $n=\ind(\gamma)$.
Let $\Sigma:(W,\partial W)\to(\Delta^{\leq c},\Delta^{<c})$ be the $S^1$-equivariant extension of a spherical descending manifold of $\gamma$, and $k$ a generator of $\HS{n}(W,\partial W)\cong\Q$. The homology classes $\Sigma_*(k)$ and $\overline\Sigma_*(k)$ are linearly independent in $\HS{n}(\Delta^{\leq c},\Delta^{<c})$, and in particular their sum
\begin{align*}
 h:=\Sigma_*(k)+\overline\Sigma_*(k) \in \HS{n}(\Delta^{\leq c},\Delta^{<c})
\end{align*}
is non-zero. For each $a\in(c,\infty]$, we denote by $\iota^a:(\Delta^{\leq c},\Delta^{<c})\hookrightarrow(\Delta^{<a},\Delta^{<c})$ the inclusion. 
If $a$ is sufficiently close to $c$, then $\iota^a$ is a homotopy equivalence, and in particular $\iota_*^a(h)\neq0$. In contrast, by Lemma~\ref{l:Fet}, we have $\iota^\infty_*(h)=0$, and therefore $\iota^a_*(h)=0$ for all $a\in(c,\infty)$ large enough as well. The transition occurs at a critical value
\begin{align*}
 b
 :=
 \inf\big\{ a\in(c,\infty)\ \big|\ \iota^a_*(h)=0 \big\}
 =
 \max\big\{ a\in(c,\infty)\ \big|\ \iota^a_*(h)\neq0 \big\}.
\end{align*}
We consider the triple of spaces $\Delta^{<c}\subset\Delta^{<b}\subset\Delta^{\leq b}$, and the associated long exact sequence
\[
...
\longrightarrow
\HS{n+1}(\Delta^{\leq b},\Delta^{<b})
\overset{\partial_*}{\longrightarrow}
\HS{n}(\Delta^{<b},\Delta^{<c})
\overset{i_*}{\longrightarrow}
\HS{n}(\Delta^{\leq b},\Delta^{<c})
\longrightarrow
...
\]
where $i_*$ is induced by the inclusion, and $\partial_*$ is the connecting homomorphism. The above definition of the critical value $b$ implies that 
\[\iota_*^b(h)\in\ker(i_*)=\im(\partial_*),\] 
and in particular $\HS{n+1}(\Delta^{\leq b},\Delta^{<b})\neq 0$. This implies that there exists a closed geodesic $\zeta\in\crit^+(E)\cap E^{-1}(b)\cap\Delta$ of index $\ind(\zeta)=n+1$.
\end{proof}

\section{Closed geodesics of Anosov Riemannian manifolds}
\label{s:Anosov_closed_geodesics}

\subsection{Parity of the indices}

The first step towards Theorem~\ref{mt:Klingenberg} consists in establishing the parity of the indices of the closed geodesics of an Anosov Riemannian manifold.

\begin{proposition}\label{p:even_ind}$ $
Every closed geodesic of an Anosov Riemannian manifold has even index.
\end{proposition}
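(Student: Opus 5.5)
By Proposition~\ref{p:index_parity}, it suffices to show that, for every closed geodesic $\gamma$ with $v=\dot\gamma(0)/\|\dot\gamma(0)\|_g$, the restriction of the global stable bundle $\Es$ to the periodic orbit $t\mapsto\phi_t(v)$ and the normal bundle $N\gamma$ are simultaneously orientable or simultaneously unorientable. The plan is to compare both bundles with a third one, defined along the orbit, using the global Anosov structure, and to show that they are isomorphic as vector bundles over the circle. Orientability of a bundle over a circle only depends on its isomorphism class, so this settles the claim.

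The natural comparison bundle is the horizontal-type bundle $\Xi/\!\!\sim$. First I would observe that the restriction of $d\proj$ to the contact distribution $\Xi$ maps $\Xi_{\phi_t(v)}$ onto $\dot\gamma^\perp = N\gamma$. Its kernel there is exactly the vertical space $V_{\phi_t(v)}$; indeed $V\subset\Xi$, since $\lambda$ vanishes on vertical vectors. Consequently $N\gamma\cong \Xi/V$ along the orbit. On the other hand, the Anosov splitting gives $\Xi=\Es\oplus\Eu$ globally on $SM$. Here one uses that $\Es,\Eu\subset\Xi$, because $\lambda$ is $\phi_t$-invariant and $\lambda(w)=\lambda(d\phi_t w)\to 0$ for $w\in\Es$.

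The key step is then the classical fact that on an Anosov manifold $\Es$ is transverse to $V$ everywhere. This is exactly the absence of conjugate points, which is what we want to prove, so I cannot use it. Instead I would avoid transversality and work homotopically. Over all of $SM$, the bundles $\Es$ and $V$ are both Lagrangian subbundles of the symplectic bundle $(\Xi,d\lambda)$. For a Lagrangian $L$ in a symplectic vector space with a fixed Lagrangian $V$, choose a compatible complex structure $J$ on $\Xi$ along the orbit, with $JV$ a Lagrangian complement to $V$. Then $\Xi/V\cong JV\cong V$, and every Lagrangian $L$ is canonically isomorphic to $V$ up to the unitary identification $L=U(V)$ with $U$ unitary. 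A loop of Lagrangians $L_t$ in a trivialization of $(\Xi, J)\cong V\otimes\C$ has orientation character $w_1(L)=\det^2$-Maslov index mod $2$ twisted by $w_1(V)$. This gives $w_1(\Es)=w_1(N\gamma)+\mu(\Es,V)\bmod 2$, where $\mu$ is the Maslov index of the loop $\Es$ relative to $V$.

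It remains to show that the Maslov index of $\Es$ along any closed orbit is even, or better zero. Here the global structure enters. The Maslov index of the loop $\Es$ along the orbit is a homotopy invariant of the loop in $SM$, being a characteristic class evaluated on the orbit: it is the pairing of the orbit with a class in $H^1(SM;\Z)$ defined by the pair of Lagrangian subbundles $(\Es,V)$ of $\Xi$ over $SM$. I expect this step to be the main obstacle, since that class need not vanish a priori. Only its mod~$2$ reduction matters, and I would try to show this reduction vanishes using the fiberwise structure. On each fiber $S_xM\cong S^{n-1}$, the class restricts to something in $H^1(S^{n-1};\Z_2)$, which is zero for $n\ge3$. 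For $n=2$ it is in $H^1(S^1)$, and there I would use that $\Es$ is a line field which, together with $V$, defines a degree computable via a rotation argument. If this route stalls, the fallback is Klingenberg's argument itself: check the even parity directly by the counting in Proposition~\ref{p:index_parity}, using continuity of $\Es$ on the whole $SM$ to deform the periodic orbit through orbit segments to a short loop in a fiber-adapted chart, where signs can be compared explicitly.
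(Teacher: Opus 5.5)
Your reformulation is correct, but it does not carry the argument. Your relation $w_1(\Es|_\Gamma)=w_1(N\gamma)+\mu(\Es,V)\bmod 2$, combined with Proposition~\ref{p:index_parity}, says only that $\mu\equiv\ind(\gamma)\bmod 2$. In fact $|\mu|=\ind(\gamma)$, by \eqref{e:index_hyperbolic} and the monotonicity of $V$ under the linearized flow. So aiming for $\mu=0$ would mean proving that every closed geodesic has index zero, which is essentially Theorem~\ref{mt:Klingenberg}(i)--(ii). Everything therefore rests on showing that the mod~$2$ class $w\in H^1(SM;\Z_2)$ vanishes on closed orbits, and the argument you give for that step is a non sequitur: vanishing on the fibers does not imply vanishing on $SM$. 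For $\dim M\geq 3$, the map $\proj^*:H^1(M;\Z_2)\to H^1(SM;\Z_2)$ is an isomorphism, so every class restricts to zero on the fibers and this gives no information. All you learn is that $w=\proj^*\alpha$ for some $\alpha\in H^1(M;\Z_2)$, and hence $\langle w,[\Gamma]\rangle=\langle\alpha,[\gamma]\rangle$. This disposes of contractible closed geodesics; that is exactly the paper's argument via injectivity of $\proj_*:\pi_1(SM)\to\pi_1(M)$. But nothing in your proposal excludes $\langle\alpha,[\gamma]\rangle\neq0$ for non-contractible $\gamma$. Your fallback cannot help there either, since a non-contractible orbit cannot be deformed into a fiber.

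The missing ingredient is variational, not bundle-theoretic. Each non-contractible free homotopy class contains an energy-minimizing closed geodesic $\gamma_0$, which has index $0$, so $\langle\alpha,[\gamma_0]\rangle=0$. Since the pairing depends only on the free homotopy class, every closed geodesic in that class has even index (this even forces $\alpha=0$). The paper implements this comparison by a regular homotopy from $\gamma_0$ to $\gamma_1$ when $\dim M\geq3$, and by the concatenation argument when $\dim M=2$.

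In dimension $2$ there is a second gap. There $H^1(S^1;\Z_2)\neq0$, so vanishing on the fibers is a genuine statement: it is Lemma~\ref{l:Es_fiber_orientable}, since $V|_{S_xM}=T(S_xM)$ is trivial. A rotation count alone does not force the rotation of $\Es$ around the fiber to be even. The paper obtains $h=0$ from the reversibility $\phi_t\circ\iota=\iota\circ\phi_{-t}$, which sends $\Es_{v_\theta}$ to $\Eu_{v_{\theta+\pi}}$. Combined with the transversality of $\Es$ and $\Eu$, this keeps $\alpha_s(\theta)+\alpha_s(\theta+\pi)$ away from $\pi\Z$. Your proposal contains no substitute for this mechanism.
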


The proof of this proposition is rather straightforward for Anosov Riemannian manifolds of dimension at least 3, but surprisingly requires a non-trivial argument based on the reversibility of the geodesic flow for Anosov Riemannian surfaces. We provide separate proofs for the two cases.

\begin{proof}[Proof of Proposition~\ref{p:even_ind} for $\dim(M)\geq3$]
Let $\gamma\in\crit^+(E)$ be a contractible closed geodesic, $x:=\gamma(0)$, and $v:=\dot\gamma(0)/\|\dot\gamma(0)\|_g$. We consider the lift $\Gamma:S^1\to SM$, $\Gamma(t):=\dot\gamma(t)/\|\dot\gamma(t)\|_g$, which is the corresponding reparametrized orbit of the geodesic flow, i.e.
\[\Gamma(t):=\phi_{t\tau}(v),\qquad\forall t\in\R.\]  
The unit tangent bundle $\proj:SM\to M$ induces a long exact sequence of homotopy groups
\begin{align*}
...
\longrightarrow
\pi_1(S_xM,v)
\longrightarrow
\pi_1(SM,v)
\overset{\proj_*}{\longrightarrow}
\pi_1(M,x)
\longrightarrow
...
\end{align*}
Since $n:=\dim(M)\geq3$, the $(n-1)$-sphere $S_xM$ is simply connected, and the above exact sequence implies that the base projection $\proj$ induces an injective homomorphism
\begin{align*}
\proj_*:\pi_1(SM,v)
\hookrightarrow
\pi_1(M,x).
\end{align*}
This, together with the fact that $\gamma$ is contractible, implies that $\Gamma$ is contractible in the unit tangent bundle. The stable bundle of $\Gamma$ is the restriction $\Es|_{\Gamma}$ of the ambient stable bundle $\Es\subset T(SM)$. Since $\Gamma$ is contractible we infer that $\Es|_{\Gamma}$ is an orientable bundle. Since the closed geodesic $\gamma$ is contractible, its normal bundle $N\gamma$ is orientable as well. Therefore, by Proposition~\ref{p:index_parity}, $\ind(\gamma)$ is even.

Consider now a connected component $U$ of the free loop space $\Lambda$ containing non-contractible loops. Any such $U$ always contains a closed geodesic of even index: any $\gamma_0\in\crit^+(E)\cap U$ of minimal energy $E(\gamma_0)=\min E|_U$, and thus of index $\ind(\gamma_0)=0$. Assume now that there exists another closed geodesic $\gamma_1\in\crit^+(E)\cap U$ of positive index. Since $\dim(M)\geq3$, the homotopic immersed loops $\gamma_0$ and $\gamma_1$ are actually regularly homotopic, namely homotopic through a continuous family of smooth immersions $\gamma_s:S^1\looparrowright M$, $s\in[0,1]$. We lift $\gamma_s$ to a homotopy $\Gamma_s:S^1\to SM$, $\Gamma_s(t)=\dot\gamma_s(t)/\|\dot\gamma_s(t)\|_g$. The stable bundles $\Es|_{\Gamma_0}$ and $\Es|_{\Gamma_1}$ are isomorphic, since they can be interpolated through the family of  bundles $\Es|_{\Gamma_s}$. In particular $\Es|_{\Gamma_0}$ and $\Es|_{\Gamma_1}$ are either both orientable or both unorientable. Analogously, the normal bundles $N\gamma_0$ and $N\gamma_1$ are isomorphic, since they can be interpolated through the family of normal bundles $N\gamma_s$, and therefore $N\gamma_0$ and $N\gamma_1$ are either both orientable or both unorientable. This, together with Proposition~\ref{p:index_parity}, implies that $\ind(\gamma_0)$ and $\ind(\gamma_1)$ have the same parity, and therefore $\ind(\gamma_1)$ is even.
\end{proof}

The case of Anosov Riemannian surfaces requires the following ingredient. The proof that we provide relies on the reversibility of the Riemannian geodesic flow.

\begin{lemma}\label{l:Es_fiber_orientable}
Let $(M,g)$ be an Anosov Riemannian manifold of dimension $2$. For each $x\in M$, the restricted stable bundle $\Es|_{S_xM}$ is an orientable vector bundle.
\end{lemma}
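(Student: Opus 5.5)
The plan is to reduce orientability to a winding-number count in a fixed trivialization of the contact plane along the fiber, and then use the time-reversal symmetry together with transversality of $\Es$ and $\Eu$ to force that count to be even (in fact zero).

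Fix $x\in M$ and an orientation of $T_xM$, with $J$ the rotation by $\pi/2$ in $T_xM$. Over the fiber circle $S_xM$ the contact plane $\Xi$ (of rank $2$) has a global frame $(H,V)$: $H_v$ is the horizontal lift at $v$ of $Jv$, and $V_v$ is the vertical vector $Jv\in T_v(S_xM)=V_v$. In this frame every line field $L\subset\Xi|_{S_xM}$ becomes a continuous map $S_xM\to\R P^1$. A line bundle over a circle embedded in a trivial plane bundle is orientable if and only if this map has even degree, i.e.\ the line makes an even number of half-turns. Let $d_s,d_u\in\Z$ be the degrees associated with $\Es|_{S_xM}$ and $\Eu|_{S_xM}$. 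The goal is to show $d_s=0$.

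\emph{Step 1: transversality gives $d_s=d_u$.} The Anosov splitting gives $\Es_v\cap\Eu_v=\{0\}$, so the two maps to $\R P^1$ never coincide. Writing them as angles $a,b$ modulo $\pi$, the difference $a-b$ takes values in $\R P^1$ minus a point, which is contractible. Hence $a$ and $b$ are homotopic and have the same degree.

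\emph{Step 2: reversibility gives $d_u=-d_s$.} Consider the flip $\iota(v)=-v$, which preserves $S_xM$ (acting on it as the antipodal rotation, a degree-one map of the circle) and satisfies $\iota\circ\phi_t=\phi_{-t}\circ\iota$. From the definition of $\Es,\Eu$ through exponential contraction and expansion, this gives $d\iota(\Es_v)=\Eu_{-v}$. Next compute $d\iota$ in the frame:
\begin{itemize}
\item a horizontal lift of $w$ at $v$ goes to the horizontal lift of $w$ at $-v$, so $d\iota(H_v)=-H_{-v}$;
\item the vertical vector $w$ at $v$ goes to $-w$ at $-v$, so $d\iota(V_v)=V_{-v}$.
\end{itemize}
Thus $d\iota$ acts as the reflection $\mathrm{diag}(-1,1)$ in the frame, giving $b(-v)=-a(v)$. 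Since the antipodal map has degree one and the reflection reverses the degree on $\R P^1$, we get $d_u=-d_s$.

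Combining the two steps gives $d_s=-d_s$, so $d_s=0$, and $\Es|_{S_xM}$ is orientable (indeed trivial).

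The main point requiring care is the sign bookkeeping in Step 2: checking that $d\iota$ really is a reflection, not a rotation, in the $(H,V)$ frame, and that $d\iota$ preserves $\Xi$. The latter holds because $\iota^*\lambda=-\lambda$, so the kernel is preserved. Continuity of the frame is unproblematic, since it is only needed over a single fiber.
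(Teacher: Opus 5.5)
Your proposal is correct and follows essentially the same route as the paper. Both arguments use the same frame of the contact plane along $S_xM$ (the paper's $Y,V$ are your $H,V$), the transversality $\Es\cap\Eu=\{0\}$, and the computation that $d\iota$ acts as $\mathrm{diag}(-1,1)$, so that $d\iota\,\Es_v=\Eu_{-v}$ reflects the angle. The only difference is packaging: the paper merges your two steps into the single function $\delta(\theta)=\alpha_s(\theta)+\alpha_s(\theta+\pi)$, which is bounded because it never meets $\pi\Z$, while $\delta(\theta+\pi)=\delta(\theta)+h\pi$ forces the half-turn number $h$ (your $d_s$) to vanish.
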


\begin{proof}
We employ some elements of the geometry of the unit tangent bundle of Riemannian surfaces, which can be found in many textbooks, e.g.\ \cite[Sect.~1.11]{Guillarmou:2026aa}. Let $\proj:SM\to M$ be the base projection, and $\kappa:T(TM)\to TM$ the connection map of the Riemannian surface $(M,g)$, defined by $\kappa_v(w)=\nabla_t W|_{t=0}$ for all $v\in TM$ and for all smooth $W:(-\epsilon,\epsilon)\to SM$ such that $W(0)=v$ and $\dot W(0)=w$. Here, $\nabla_t$ denotes the Levi-Civita covariant derivative. 
We fix a point $x\in M$, and an open neighborhood $U\subset M$ of $x$ that is orientable (if $M$ itself is orientable, we can choose $U=M$). We fix an arbitrary orientation on $U$, and denote by $J:TU\to TU$ its complex structure, which rotates every tangent vector by a positive angle of $\pi/2$ in every fiber of $TU$. The maps $d\proj$ and $\kappa$ together provide isomorphisms
\begin{align*}
 \Pi_v:=(d\proj(v),\kappa_v):T_v(SM)\to T_{p(v)}M\oplus\langle v\rangle^\bot,
 \qquad\forall v\in SU,
\end{align*}
where $\langle v\rangle^\bot$ denotes the orthogonal complement of $v$ in the tangent space $T_{p(v)}M$.
The tangent bundle $T(SU)$ can be trivialized by means of three vector fields $X$, $Y$, and $V$, which are defined by the equations
\begin{align*}
\Pi_v(X(v))=(v,0),
\qquad
\Pi_v(Y(v))=(Jv,0),
\qquad
\Pi_v(V(v))=(0,Jv).
\end{align*}
Let $\lambda$ be the canonical contact form of $SM$, already introduced in~\eqref{e:contact_form}. The vector field $X$ is the geodesic vector field, which generates the geodesic flow $\phi_t$ and satisfies $\lambda(X)\equiv1$ and $d\lambda(X,\cdot)\equiv0$. The other two vector fields $Y$ and $V$ span the contact distribution $\ker(\lambda)$ over $SU$. Since $d\phi_t(v)X(v)=X(\phi_t(v))$ and $d\phi_t(v)\ker(\lambda_v)=\ker(\lambda_{\phi_t(v)})$, property (\ref{i:Anosov2}) in the definition of Anosov flow implies that $\Es\oplus\Eu=\ker(\lambda)$. The unit tangent bundle admits the fiberwise involution $\iota:SM\to SM$, $\iota(v)=-v$. The geodesic flow is reversible, meaning that 
\begin{align}
\label{e:reversibility}
\phi_t\circ\iota=\iota\circ\phi_{-t}, 
\end{align}
and $d\iota(v)X(v)=-X(-v)$ at the infinitesimal level. The other two vector fields are transformed by the linearized involution as
\begin{align}
\label{e:iota_Y_V}
 d\iota(v)Y(v)=-Y(-v),
 \qquad
 d\iota(v)V(v)=V(-v).
\end{align}

Fix a unit tangent vector $v_0\in S_xM$. We parametrize the vectors in the fiber $S_xM$ by an angle variable $\theta\in\R$, setting $v_\theta=v_{\theta+2\pi}=e^{\theta J}v_0$. There exist continuous functions $\alpha_s:\R\to\R$ and $\alpha_u:\R\to\R$, which are unique up to an additive multiple of $\pi$, such that 
\begin{align}
\label{e:es}
e_s(\theta)
&
:=\cos(\alpha_s(\theta)) Y(v_\theta)  +\sin(\alpha_s(\theta)) V(v_\theta) \in \Es_{v_\theta},\\
\label{e:eu}
e_u(\theta)
&
:=\cos(\alpha_u(\theta)) Y(v_\theta)  + \sin(\alpha_u(\theta)) V(v_\theta) \in \Eu_{v_\theta}.
\end{align}
Since $e_s(\theta)$ and $e_u(\theta)$ are linearly independent, we infer
\begin{align}
\label{e:alphas_alphau}
 \alpha_s(\theta)-\alpha_u(\theta)\not\equiv0 \mbox{ mod }\pi,\qquad\forall \theta\in\R.
\end{align}
By~\eqref{e:reversibility}, we have
\begin{align*}
d\iota(v)\Es_{v_\theta}=\Eu_{v_{\theta+\pi}}.
\end{align*}
By~\eqref{e:iota_Y_V} and~\eqref{e:es}, we have
\begin{align*}
d\iota(v_\theta)e_s(\theta)
=
-\cos(\alpha_s(\theta)) Y(v_{\theta+\pi})  +\sin(\alpha_s(\theta)) V(v_{\theta+\pi}).
\end{align*}
Since $d\iota(v_\theta)e_s(\theta)\in\Eu_{v_{\theta+\pi}}$, the latter identity together with~\eqref{e:eu} implies that 
\begin{align*}
 \alpha_s(\theta) + \alpha_u(\theta+\pi) \equiv 0 \mbox{ mod }\pi,\qquad\forall\theta\in\R.
\end{align*}
This, together with~\eqref{e:alphas_alphau}, implies
\begin{align*}
\delta(\theta)
:=
\alpha_s(\theta)+\alpha_s(\theta+\pi)
\not\equiv0\mbox{ mod }\pi,\qquad\forall\theta\in\R
\end{align*}
Therefore, there exists an integer $k\in\Z$ such that $\delta(\theta)\in(k\pi,(k+1)\pi)$ for all $\theta\in\R$, and in particular the function $\delta:\R\to\R$ is uniformly bounded. Since $e_s(\theta+2\pi)\in\{e_s(\theta),-e_s(\theta)\}$, there exists $h\in\Z$ such that
\[
\alpha_s(\theta+2\pi)=\alpha_s(\theta)+h\pi,\qquad\forall\theta\in\R.
\]
This implies
\begin{align*}
 \delta(\theta+\pi)
 =
 \alpha_s(\theta+\pi)+\alpha_s(\theta+2\pi)
 =
 \delta(\theta) + h\pi,
 \qquad\forall\theta\in\R.
\end{align*}
Since the function $\delta$ is uniformly bounded, we infer that $h=0$, and therefore 
\[e_s(\theta+2\pi)=e_s(\theta),\qquad\forall\theta\in\R.\]
Namely, we have a nowhere vanishing section $e_s:\R/2\pi\Z\to\Es|_{S_xM}$, $e_s(\theta)\in\Es_{v_\theta}$ for all $\theta\in\R/2\pi\Z$. Such $e_s$ is a trivialization of the line bundle $\Es|_{S_xM}$, and we conclude that $\Es|_{S_xM}$ is orientable.
\end{proof}

\begin{proof}[Proof of Proposition~\ref{p:even_ind} for $\dim(M)=2$]
For each $x\in M$ and $v\in S_xM$, we consider once again the homotopy long exact sequence of the unit tangent bundle
\begin{align}\label{e:exact_sequence_SM}
...
\longrightarrow
\pi_1(S_xM,v)
\longrightarrow
\pi_1(SM,v)
\overset{\proj_*}{\longrightarrow}
\pi_1(M,x)
\longrightarrow
...
\end{align}
Let $\gamma\in\crit^+(E)$ be a contractible closed geodesic, and $\Gamma:S^1\to SM$, $\Gamma(t)=\dot\gamma(t)/\|\dot\gamma(t)\|_g$ its lift to the unit tangent bundle, which is the corresponding re\-pa\-ram\-e\-trized periodic orbit of the geodesic flow.
Since $\gamma$ is contractible, the above long exact sequence for $x:=\gamma(0)$ and $v:=\Gamma(0)$ implies that $\Gamma$ is homotopic to a loop $\Psi$ contained in the fiber circle $S_xM$. By Lemma~\ref{l:Es_fiber_orientable}, the restricted stable bundle $\Es|_\Psi$ is orientable. Since $\Gamma$ and $\Psi$ are homotopic, $\Es|_\Gamma$ and $\Es|_\Psi$ are isomorphic, and therefore $\Es|_\Gamma$ is an orientable vector bundle. Moreover, since $\gamma$ is contractible, its normal bundle $N\gamma$ is orientable. By Proposition~\ref{p:index_parity}, we infer that $\ind(\gamma)$ is even.

Consider now a connected component $U$ of the free loop space $\Lambda$ containing non-contractible loops, and a closed geodesic $\gamma\in\crit^+(E)\cap U$ of minimal energy $E(\gamma)=\min E|_U$, which must have index $\ind(\gamma)=0$. Assume that there exists another closed geodesic $\zeta\in\crit^+(E)\cap U$ of positive index. There exists a continuous path $\nu:[0,1]\to M$ joining $\nu(0)=\gamma(0)=:x$ and $\nu(1)=\zeta(0)=:y$ such that $[\gamma]=[\nu*\zeta*\overline\nu]$ in $\pi_1(M,x)$. Here, $*$ denotes the concatenation of paths, $\overline\nu=\nu(1-\cdot)$ denotes the path $\nu$ with reversed orientation, and the resulting loop $\nu*\zeta*\overline\nu:[0,3]\to M$ is implicitly reparametrized on $S^1=\R/\Z$ to represent an element of the fundamental group $\pi_1(M,x)$. Let $\alpha:[0,1]\to M$ be a smooth immersed path with endpoints $\alpha(0)=x$ and $\alpha(1)=y$, homotopic to $\nu$ through continuous paths joining $x$ and $y$, and with tangent vectors $\dot\alpha(0)=\dot\gamma(0)$ and $\dot\alpha(1)=\dot\zeta(0)$. Analogously, let $\beta:[0,1]\to M$ be a smooth immersed path with endpoints $\beta(0)=y$ and $\beta(1)=x$, homotopic to $\overline\nu$ through continuous paths joining $y$ and $x$, and with tangent vectors $\dot\beta(0)=\dot\zeta(0)$ and $\dot\beta(1)=\dot\gamma(0)$.
Notice that the concatenation $\sigma:=\alpha*\zeta*\beta$ is a smooth immersed loop such that $\dot\sigma(0)=\dot\gamma(0)$, and $[\sigma]=[\gamma]$ in $\pi_1(M,x)$. 
We consider the lifts 
\[
\Gamma:=\frac{\dot\gamma}{\|\dot\gamma\|_g},
\qquad 
A:=\frac{\dot\alpha}{\|\dot\alpha\|_g},
\qquad
B:=\frac{\dot\beta}{\|\dot\beta\|_g},
\qquad 
Z:=\frac{\dot\zeta}{\|\dot\zeta\|_g},
\]
and the concatenation 
$\Sigma:=A*Z*B$. We reparametrize both $\sigma$ and $\Sigma$ on $S^1$, so that 
\begin{align*}
\Sigma=\frac{\dot\sigma}{\|\dot\sigma\|_g}.
\end{align*}
Notice that $v:=\Sigma(0)=\Gamma(0)$. Since $\proj_*[\Sigma]=\proj_*[\Gamma]$ in $\pi_1(M,x)$, the exact sequence \eqref{e:exact_sequence_SM} implies that the concatenated loop $\Sigma*\overline\Gamma$ is homotopic to a loop $\Psi$ contained in the fiber $S_xM$. Therefore, the restricted stable bundles $\Es|_{\Sigma*\overline\Gamma}$ and $\Es|_\Psi$ are isomorphic. By Lemma~\ref{l:Es_fiber_orientable}, the vector bundle $\Es|_\Psi$ is orientable. Therefore $\Es|_{\Sigma*\overline\Gamma}$ is orientable as well, and thus a trivial vector bundle (since a vector bundle over a circle is orientable if and only if it is trivial). We infer that $\Es|_{\Sigma}$ and $\Es|_{\Gamma}$ are isomorphic vector bundles.
Since $\beta*\alpha=\proj(B*A)$ is a contractible loop, we have $\proj_*[B*A]=1$ in $\pi_1(M,y)$. Once again, the exact sequence \eqref{e:exact_sequence_SM} implies that the loop $B*A$ is homotopic to a loop $\Phi$ contained in the fiber $S_yM$. By Lemma~\ref{l:Es_fiber_orientable}, the vector bundle $\Es|_\Phi$ is orientable. Therefore,  $\Es|_{B*A}$ is orientable as well, and thus a trivial bundle, and we infer that $\Es|_{\Sigma}$ and $\Es|_{Z}$ are isomorphic. Overall, we showed that $\Es|_{Z}$ and $\Es|_{\Gamma}$ are isomorphic, and thus either both orientable or both unorientable. Since $\gamma$ and $\zeta$ are homotopic, the restricted tangent bundles $TM|_{\gamma}$ and $TM|_{\zeta}$ are isomorphic, and therefore either both orientable or both unorientable. Moreover, such bundles split as 
\[
TM|_{\gamma}=N\gamma\oplus\langle\dot\gamma\rangle,
\qquad
TM|_{\zeta}=N\zeta\oplus\langle\dot\zeta\rangle.
\]
Clearly, the line bundles $\langle\dot\gamma\rangle$ and $\langle\dot\zeta\rangle$ are oriented by the tangent vectors $\dot\gamma$ and $\dot\zeta$ respectively. Therefore, the four vector bundles $TM|_{\gamma}$, $N\gamma$, $TM|_{\zeta}$, $N\zeta$ are either all orientable or all unorientable. We can finally invoke Proposition~\ref{p:index_parity}, and infer that $\ind(\gamma)$ and $\ind(\zeta)$ have the same parity, and therefore $\ind(\zeta)$ is even.
\end{proof}

In a bumpy closed Riemannian manifold, Morse theory implies that, if the $S^1$-equivariant homology $\HS{n}(\Lambda^{<b},\Lambda^{<a})$ is non-trivial for some $b>a>0$, there exists a closed geodesic $\gamma$ such that $E(\gamma)\in[a,b)$ and $\ind(\gamma)=n$. This, together with Proposition~\ref{p:even_ind}, implies that the energy functional of an Anosov Riemannian manifold is \emph{perfect} for the $S^1$-equivariant homology relative to $\Lambda^0$ with rational coefficients.

\begin{corollary}[$S^1$-equivariant perfectness]\label{c:perfectness}
On any Anosov Riemannian manifold, for each $0<a<b<c$, the $S^1$-equivariant homology long exact sequence of the triple $\Lambda^{<a}\subset\Lambda^{<b}\subset\Lambda^{<c}$ reduces to a short exact sequence
\begin{align*}
 0
 \longrightarrow
 \HS*(\Lambda^{<b},\Lambda^{<a})
 \longrightarrow
 \HS*(\Lambda^{<c},\Lambda^{<a})
 \longrightarrow
 \HS*(\Lambda^{<c},\Lambda^{<b})
 \longrightarrow
 0.
\end{align*}
\end{corollary}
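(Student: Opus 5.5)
The plan is to show that the connecting homomorphism $\partial_*:\HS{k+1}(\Lambda^{<c},\Lambda^{<b})\to\HS{k}(\Lambda^{<b},\Lambda^{<a})$ of the triple vanishes for every degree $k$. Exactness of the triple sequence then gives injectivity of the first map and surjectivity of the second, which is exactly the claimed short exact sequence. To prove the vanishing, I will use the parity of indices (Proposition~\ref{p:even_ind}) together with the Morse-theoretic fact stated just before the corollary: if $\HS{n}(\Lambda^{<\beta},\Lambda^{<\alpha})\neq0$ for some $\beta>\alpha>0$, then there is a closed geodesic of index $n$ with energy in $[\alpha,\beta)$. An Anosov manifold is bumpy, since every closed geodesic is hyperbolic and hence non-degenerate, so this fact applies.

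I would first establish a concentration statement. For any $0<\alpha<\beta$, the group $\HS{n}(\Lambda^{<\beta},\Lambda^{<\alpha})$ vanishes whenever $n$ is odd. This follows at once from the Morse-theoretic fact and Proposition~\ref{p:even_ind}, since there are no closed geodesics of odd index.

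For the vanishing of $\partial_*$, take $k$ arbitrary. If $k$ is odd, the target $\HS{k}(\Lambda^{<b},\Lambda^{<a})$ is zero. If $k$ is even, then $k+1$ is odd and the source $\HS{k+1}(\Lambda^{<c},\Lambda^{<b})$ is zero. Either way $\partial_*=0$, which completes the argument.

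The main obstacle is justifying the Morse-theoretic fact with full $S^1$-equivariant local homology. For each non-degenerate critical circle $S^1\cdot\zeta$ of index $n$, its local equivariant homology must be concentrated in degree $n$. For primitive $\zeta$ this is the computation \eqref{e:HSW_HB}. For an iterate $\zeta=\eta^m$, the stabilizer is $\Z_m$, and with rational coefficients the local equivariant homology is the $\Z_m$-invariant part of $H_*$ of the negative bundle's disk-sphere pair. This part is either $\Q$ in degree $n$ or zero, depending on whether the generator of $\Z_m$ preserves the orientation of $V^-$. In particular it is never in a degree other than $n$. I would assume this standard input, together with finiteness of critical circles in compact energy windows (from non-degeneracy and Palais-Smale), and deduce the fact by the usual induction over critical levels using the deformation lemma of Section~\ref{ss:antigradient_flow}.
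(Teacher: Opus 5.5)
Your proposal is correct and follows the paper's own route. The paper deduces the corollary from the same Morse-theoretic fact: in a bumpy manifold, $\HS{n}(\Lambda^{<b},\Lambda^{<a})\neq0$ forces a closed geodesic of index $n$ with energy in $[a,b)$. Combined with the evenness of all indices (Proposition~\ref{p:even_ind}), all odd-degree relative groups vanish, so every connecting homomorphism vanishes by parity. Your justification of the local equivariant homology of iterated critical circles (the $\Z_m$-invariant part, concentrated in degree $n$) fills in a detail the paper leaves implicit.
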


\subsection{Closed geodesics of index zero}
As a first step towards Theorem~\ref{mt:Klingenberg}, we describe the closed geodesics of index zero in an Anosov Riemannian manifold.

\begin{lemma}\label{l:index_zero}
On any Anosov Riemannian manifold, the following assertions hold.
\begin{enumerate}[$(i)$]
\setlength{\itemsep}{3pt}
\item\label{i:index_zero_contractible} There exists no contractible closed geodesic $\gamma$ of index $\ind(\gamma)=0$.

\item\label{i:index_zero_non_contractible} On every connected component of non-contractible loops $U\subset\Lambda$, there exists a unique critical circle $S^1\cdot\gamma$ of closed geodesics of index $\ind(\gamma)=0$.

\end{enumerate}
\end{lemma}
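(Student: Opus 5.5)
Both assertions will follow from a mountain-pass argument combined with Proposition~\ref{p:even_ind}. On an Anosov manifold every closed geodesic is hyperbolic, hence non-degenerate, so $(M,g)$ is bumpy. Since all indices are even, no closed geodesic has index $1$. Moreover, the Morse--Bott local model~\eqref{e:Morse_Bott} shows that a critical circle of index $0$ is a strict local minimum of $E$ modulo the $S^1$ action. Concretely, for $\epsilon>0$ small, a tubular neighborhood $U_\epsilon$ of the circle with $E>c$ on $\partial U_\epsilon\setminus S^1\cdot\gamma$ can be chosen so that $\inf_{\partial U_\epsilon}E>c$.

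For (i), argue by contradiction and let $\gamma$ be a contractible closed geodesic with $\ind(\gamma)=0$ and $c=E(\gamma)$. Since $\gamma$ is contractible, there is a path $u:[0,1]\to\Delta$ from $u(0)=\gamma$ to a constant loop $u(1)\in\Lambda^0$. Let $\mathcal P$ be the class of all such paths and set
\[
b:=\inf_{u\in\mathcal P}\max_{s\in[0,1]}E(u(s)).
\]
Every path in $\mathcal P$ must leave $U_\epsilon$, because $u(1)$ has energy $0<c$ and lies outside $U_\epsilon$, while the local model rules out a path staying inside $U_\epsilon$ and reaching energy below $c$ without crossing $\partial U_\epsilon$. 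Hence $b\ge\inf_{\partial U_\epsilon}E>c$.

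The standard minimax argument then applies, using the Palais--Smale condition and the deformation property of the anti-gradient flow $\Psi_s$ from Section~\ref{ss:antigradient_flow}. This flow keeps $\gamma$ and the constant loops fixed, so $\mathcal P$ is invariant under it. The argument yields critical points at level $b$, and we must show that at least one of them has index $1$. Since there are only finitely many critical circles at level $b$ (Palais--Smale together with non-degeneracy), it suffices to show that if every critical circle at level $b$ has index $0$ or index $\ge2$, then one can push paths below $b$. Near an index-$0$ circle the energy is at least $b$, so a nearly optimal path cannot pass through it at the level of the max. Near a circle of index $\ge2$, the negative bundle $V^-$ of rank $\ge2$ lets a one-parameter path be perturbed, by general position in the Morse--Bott chart, to avoid the zero section and then be pushed down by the flow. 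Either way we contradict the definition of $b$, so some critical circle at level $b$ has index exactly $1$, which contradicts Proposition~\ref{p:even_ind}.

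An alternative homological formulation is to take $\HS1$ or $H_1$ of $(\Lambda^{<b},\Lambda^{<c}\cup\ldots)$. I would instead phrase the step via Morse theory: the relative class in $H_1(\Delta,\{\gamma\}\cup\Lambda^0)$ or the $\pi_0$-change of sublevel sets. Concretely, $\gamma$ and the constant loops lie in different path components of $\Delta^{<a}$ for $a$ slightly above $c$, but in the same path component for large $a$. By the Morse lemma, the number of path components of $\Delta^{<a}$ can decrease while $a$ crosses a critical level only through critical circles of index $1$, since attaching handles of index $\ge2$ does not change $\pi_0$. This contradicts the parity of the indices.

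For (ii), existence comes first. The component $U$ is closed under the flow, $E|_U$ is bounded below by a positive constant (non-contractible loops have length at least twice the injectivity radius), and Palais--Smale guarantees that the minimum is attained at a closed geodesic of index $0$. For uniqueness, suppose $S^1\cdot\gamma_0$ and $S^1\cdot\gamma_1$ are two distinct index-$0$ critical circles in $U$. Run the same $\pi_0$ argument with paths in $U$ from $\gamma_0$ to $\gamma_1$, using that $U$ is path-connected. For $a$ slightly above $\max E(\gamma_i)$, the two circles lie in different path components of $U^{<a}$, since each sits at the bottom of its own isolated local-minimum well. For large $a$ they lie in the same component. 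Hence a merging level exists, and it requires a critical circle of index $1$, which is again a contradiction.

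The main obstacle is the rigorous step ``a change in $\pi_0$ of sublevel sets at level $b$ forces an index-$1$ critical circle''. This must be justified in the Morse--Bott, infinite-dimensional setting: using $\Psi_s$ to retract $\Lambda^{<b+\epsilon}$ into $\Lambda^{<b}\cup U$, where $U$ is a union of Morse--Bott neighborhoods, and then checking locally that $U^{<b}\cup S^1\cdot\zeta$ is connected to $U^{<b}$ through the region near $S^1\cdot\zeta$ exactly as it would be without the circle whenever $\ind(\zeta)\ge2$. For index $0$ the local set $U^{<b}$ is empty, so no new connections can arise there either.
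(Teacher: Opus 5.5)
Your proposal is correct and follows essentially the paper's route. It uses the same mountain-pass classes (paths from $\gamma$ to $\Lambda^0$, resp.\ between the two index-$0$ circles), the same local-minimum argument to put the min-max level strictly above $E(\gamma)$, and the same analysis at that level: index-$0$ circles are avoided via a neighborhood $Z$ with $\inf E|_{\partial Z}$ above the level, and index-$\geq2$ circles are bypassed. The paper phrases the index-$\geq2$ step as path-connectedness of $W(S^1\cdot\zeta)\cap\Lambda^{<c}$, which is the same local fact as your general-position step, provided the set the path must avoid is the codimension-$\geq2$ set $\{v^-=0\}$ rather than just the critical circle.
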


\begin{proof}$ $
\begin{enumerate}[(i)]
\setlength{\itemsep}{3pt}

\item\label{i:zeroindex_1} Assume by contradiction that there exists a contractible closed geodesic $\gamma$ of index $\ind(\gamma)=0$. We consider the min-max value
\begin{align*}
 c:=\inf_{\Gamma}\max_{s\in[0,1]} E(\Gamma(s)),
\end{align*}
where the infimum ranges over the space of continuous paths $\Gamma:[0,1]\to\Lambda$ such that $\Gamma(0)=\gamma$ and $\Gamma(1)\in\Lambda^0$. By Morse theory, $c$ is a critical value of $E$.
We claim that there exists a critical circle in $\crit(E)\cap E^{-1}(c)$ of index 1. This contradicts Proposition~\ref{p:even_ind}.

The proof of the claim is very standard, and we present it here for the reader's convenience. Since $\gamma$ is non-degenerate and has index $\ind(\gamma)=0$, the critical circle $S^1\cdot\gamma$ is a local minimizer of $E$. The Morse-Bott lemma \eqref{e:Morse_Bott} provides an arbitrarily small neighborhood $U\subset\Lambda\setminus\Lambda^0$ of $S^1\cdot\gamma$ such that $\inf E|_{\partial U}>E(\gamma)$. Any $\Gamma$ as above must exit $U$ in order to reach $\Lambda^0$, and in particular $c>\inf E|_{\partial U}>E(\gamma)$. 

Assume by contradiction that no critical circle in $E^{-1}(c)$ has index 1. Therefore, we have $\crit(E)\cap E^{-1}(c)=K_0\cup K_2$, where each $\zeta\in K_0$ has index $\ind(\zeta)=0$, whereas any $\zeta\in K_2$ has index $\ind(\zeta)\geq2.$
By Morse-Bott Lemma \eqref{e:Morse_Bott}, there exists an open neighborhood $Z$ of $K_0$ such that
\[b:= \inf E|_{\partial Z}>c,\]
whereas each $S^1\cdot\zeta\subset K_2$ has an open neighborhood $W(S^1\cdot\zeta)$ such that the intersection $W(S^1\cdot\zeta)\cap\Lambda^{<c}$ is path-connected. We set
\begin{align*}
 W:=\bigcup_{S^1\cdot\zeta\subset K_2} W(S^1\cdot\zeta)
\end{align*}
Let $\Gamma$ be a path as above that is almost optimal, so that $a:=\max E\circ\Gamma<b$ and $\crit^+(E)\cap E^{-1}[c,a]\subset Z\cup W$. By pushing $\Gamma$ with the anti-gradient flow $\Psi_s$ of the energy functional $E$, namely replacing $\Gamma$ with $\Psi_s\circ\Gamma$ for a sufficiently large $s>0$, we can assume that the image of $\Gamma$ is contained in $\Lambda^{<c}\cup Z\cup W$. Actually, since $a<b$, we infer that $\Gamma$ does not enter $Z$. Moreover, every portion $\Gamma|_{[a,b]}$ contained in some $W(S^1\cdot\zeta)$ and with endpoints $\Gamma(a),\Gamma(b)\in\Lambda^{<c}$ can be replaced with a new segment entirely contained in $\Lambda^{<c}$ and having the same endpoints $\Gamma(a)$ and $\Gamma(b)$. Namely, in the above min-max scheme, we can choose a path $\Gamma$ that is entirely contained in $\Lambda^{<c}$, which contradicts the very definition of $c$.

\item Let $U$ be a connected component of the free loop space $\Lambda$ containing non-contractible loops. On $U$, the energy functional achieves its minimum $c:=\min E|_U$. Any closed geodesic $\gamma\in\crit(E)\cap E^{-1}(c)\cap U$ has index $\ind(\gamma)=0$. Assume by contradiction that there exists another closed geodesic $\zeta\in\crit^+(E)\cap U\setminus S^1\cdot\gamma$ of index $\ind(\zeta)=0$. We consider the min-max value
\begin{align*}
 b:=\inf_{\Gamma}\max_{s\in[0,1]} E(\Gamma(s)),
\end{align*}
where the infimum ranges over the space of continuous paths $\Gamma:[0,1]\to\Lambda$ such that $\Gamma(0)=\gamma$ and $\Gamma(1)=\zeta$. Arguing as in point (\ref{i:zeroindex_1}), we infer that $b$ is a critical value of $E$ and there exists a critical circle in $\crit(E)\cap E^{-1}(b)$ of index 1, which contradicts Proposition~\ref{p:even_ind}.
\qedhere

\end{enumerate}
\end{proof}

Lemma~\ref{l:index_zero} has the following immediate consequence on the fundamental group of an Anosov Riemannian manifold.

\begin{corollary}\label{c:fundamental_group}
The fundamental group of an Anosov Riemannian manifold has no element of finite order. Namely, for any non-contractible loop $\gamma\in\Lambda$ and for any integer $m\geq2$, the $m$-th iterate $\gamma^m$ is non-contractible as well.
\end{corollary}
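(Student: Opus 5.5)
Argue by contradiction. Suppose $\gamma\in\Lambda$ is non-contractible but $\gamma^m$ is contractible for some $m\geq2$. Let $U\subset\Lambda$ be the connected component containing $\gamma$. By Lemma~\ref{l:index_zero}(\ref{i:index_zero_non_contractible}), $U$ contains a closed geodesic $\gamma_0$ of index $0$; we may take it of minimal energy $E(\gamma_0)=\min E|_U$. Free homotopy classes are preserved by iteration: if $\gamma_s$ is a homotopy from $\gamma$ to $\gamma_0$, then $\gamma_s^m$ is a homotopy from $\gamma^m$ to $\gamma_0^m$. Hence $\gamma_0^m$ is a contractible loop. It is also a closed geodesic, being an iterate of one.

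It remains to compute the index of $\gamma_0^m$. On an Anosov Riemannian manifold every closed geodesic is hyperbolic, so the index formula~\eqref{e:index_hyperbolic} gives $\ind(\gamma_0^m)=m\,\ind(\gamma_0)=0$. Concretely, the orbit of the geodesic flow associated with $\gamma_0^m$ traverses the orbit of $\gamma_0$ exactly $m$ times, so the sum of the $d_t$ over the longer period is $m$ times the original sum. Thus $\gamma_0^m$ is a contractible closed geodesic of index $0$, which contradicts Lemma~\ref{l:index_zero}(\ref{i:index_zero_contractible}).

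The only point needing care is that Lemma~\ref{l:index_zero}(\ref{i:index_zero_contractible}) does not require primitivity. This is so: its min-max proof only uses that $S^1\cdot\gamma_0^m$ is a non-degenerate local minimum of $E$. Non-degeneracy holds because $\gamma_0^m$ is hyperbolic, since powers of a hyperbolic matrix remain hyperbolic. The conclusion is that for every non-contractible $\gamma$ and every $m\geq2$ the iterate $\gamma^m$ is non-contractible, i.e.\ $\pi_1(M)$ has no nontrivial element of finite order.
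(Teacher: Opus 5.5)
Your proof is correct and follows essentially the same route as the paper: take an energy minimizer $\gamma_0$ in the component of $\gamma$, which has index $0$, use hyperbolicity to get $\ind(\gamma_0^m)=m\,\ind(\gamma_0)=0$, and apply Lemma~\ref{l:index_zero}(\ref{i:index_zero_contractible}). The paper argues directly rather than by contradiction. Your explicit checks are sound and make the argument slightly more self-contained: that iteration preserves free homotopy classes, and that Lemma~\ref{l:index_zero}(\ref{i:index_zero_contractible}) does not require primitivity.
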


\begin{proof}
Let $U$ be a connected component of the free loop space $\Lambda$ consisting of non-contractible loops, and $\gamma\in\crit^+(E)\cap U$ a closed geodesic such that $E(\gamma)=\min E|_U$. Being a minimizer of the energy in its connected component, $\gamma$ has index $\ind(\gamma)=0$. Since the closed Riemannian manifold is assumed to be Anosov, $\gamma$ is hyperbolic, and $\ind(\gamma^m)=m\,\ind(\gamma)=0$ for all integers $m\geq1$. Lemma~\ref{l:index_zero}(\ref{i:index_zero_contractible}) implies that every iterate $\gamma^m$ is non-contractible.
\end{proof}

\subsection{Proof of Klingenberg's Theorem~\ref{mt:Klingenberg}}
We address the three points of Theorem~\ref{mt:Klingenberg} separately, and we rewrite them for the reader's convenience.\vspace{10pt}

\noindent\textbf{Theorem~\ref{mt:Klingenberg}(\ref{i:Klingenberg_1}).} \emph{On any Anosov Riemannian manifold, no closed geodesic is contractible.}

\begin{proof}
Assume by contradiction that the Anosov Riemannian manifold $(M,g)$ admits a contractible closed geodesic. Let $\gamma$ be any such closed geodesic of minimal energy among all contractible closed geodesics. This implies that $\gamma$ cannot be the multiple iterate of a contractible closed geodesic. By Corollary~\ref{c:fundamental_group}, $\gamma$ cannot be the multiple iterate of a non-contractible closed geodesic either. Therefore $\gamma$ is primitive. 
By Lemma~\ref{l:index_zero}(\ref{i:index_zero_contractible}), $\ind(\gamma)>0$.
By Lemma~\ref{l:shortest_contractible}, $\gamma$ is spherical. 
By Theorem~\ref{t:Fet}, there exists a contractible closed geodesic $\zeta\in\crit^+(E)$ of index $\ind(\zeta)=\ind(\gamma)+1$. Therefore, either $\ind(\gamma)$ or $\ind(\zeta)$ must be odd, which contradicts Proposition \ref{p:even_ind}.
\end{proof}

The previous theorem has the following consequences on the topology of Anosov Riemannian manifolds and of their free loop spaces.

\begin{corollary}\label{c:homotopy_M}
All higher homotopy groups of any Anosov Riemannian manifold $M$ vanish, i.e. 
\[
\pi_{n}(M)=0,\qquad\forall n\geq2.
\]
\end{corollary}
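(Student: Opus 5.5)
We argue by contradiction, combining Theorem~\ref{mt:Klingenberg}(\ref{i:Klingenberg_1}) with a min-max over spherical families of loops. Suppose $\pi_{n+1}(M)\neq0$ for some $n\geq1$. By Corollary~\ref{c:spherical_maps}, there is a map of pairs $\sigma:(B^n,\partial B^n)\to(\Lambda,\Lambda^0)$ that is not homotopic, as a map of pairs, to a map with image in $\Lambda^0$. Since $B^n$ is connected and $\sigma(\partial B^n)\subset\Lambda^0$, the image of $\sigma$ lies in the component $\Delta$ of contractible loops. By Theorem~\ref{mt:Klingenberg}(\ref{i:Klingenberg_1}), we have $\crit(E)\cap\Delta=\Lambda^0$: the only critical points of $E$ in $\Delta$ are the constant loops.

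We then deform $\sigma$ using the anti-gradient flow $\Psi_s$ of Section~\ref{ss:antigradient_flow}. The flow preserves $\Delta$ and fixes $\Lambda^0$ pointwise, so $\Psi_s\circ\sigma$ is homotopic to $\sigma$ as a map of pairs. We claim that, for each $\epsilon>0$, there is $s$ with $\max E\circ\Psi_s\circ\sigma<\epsilon$. Set $c:=\lim_{s\to\infty}\max E\circ\Psi_s\circ\sigma$, and suppose $c>0$. There are no critical points of $E$ in $\Delta\cap E^{-1}[c/2,\,c+1]$. Combined with the Palais--Smale condition, this gives a uniform lower bound $\|\nabla E\|_G\geq\delta>0$ on that set. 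Hence each loop in the image with energy in $[c/2,c+1]$ loses energy at rate at least $\delta^2$ under the flow. Compactness of $B^n$ then forces the maximum to drop below $c$ in finite time, a contradiction. This is the only step requiring care. It can equally be phrased through the deformation statement of Section~\ref{ss:antigradient_flow}, applied with an empty neighborhood $U$ at each level $c'\in(0,c]$: since there are no critical points, one gets $\Psi_s(\Delta^{<c'+\epsilon})\subset\Delta^{<c'}$, and a standard compactness argument over $[\epsilon,c]$ completes the claim.

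Now choose $\epsilon<\inj(M,g)^2$. Every loop $\zeta$ in the image of $\Psi_s\circ\sigma$ then has length less than $\inj(M,g)$. We contract these loops using the homotopy \eqref{e:zeta_r} from the proof of Lemma~\ref{l:shortest_contractible}:
\[
\zeta_r(t)=\exp_{\zeta(0)}\big((1-r)\exp_{\zeta(0)}^{-1}(\zeta(t))\big).
\]
This homotopy depends continuously on $\zeta$ in the $W^{1,2}$ topology and fixes constant loops. It therefore gives a homotopy of pairs from $\Psi_s\circ\sigma$ to a map with image in $\Lambda^0$. This contradicts the choice of $\sigma$, so $\pi_{n+1}(M)=0$ for every $n\geq1$.
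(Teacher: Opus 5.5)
Your proof is correct and takes essentially the same route as the paper. Both arguments use Corollary~\ref{c:spherical_maps} to obtain a family $\sigma$ with values in $\Delta$ and boundary in $\Lambda^0$ that cannot be homotoped into $\Lambda^0$, both contract loops of length less than $\inj(M,g)$ through the exponential map, and both invoke Theorem~\ref{mt:Klingenberg}(\ref{i:Klingenberg_1}). The only difference is presentational: the paper cites the Lusternik--Fet principle that the min-max value over the homotopy class of $\sigma$ is critical and shows it is at least $\inj(M,g)^2>0$, whereas you prove that step directly by an anti-gradient deformation using the Palais--Smale condition.
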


\begin{proof}
Assume by contradiction that $\pi_n(M)\neq0$ for some $n\geq2$. Namely, there exists a continuous map $\sigma_0:S^n\to M$ that is not homotopic to a constant map. This, together with Corollary~\ref{c:spherical_maps}, implies that there exists a continuous map \[\sigma:(B^{n-1},\partial B^{n-1})\to(\Lambda,\Lambda^0)\] that is not homotopic as a map of pairs to a continuous map whose image is entirely contained in $\Lambda^0$. Notice that $\sigma$ takes values in the connected component $\Delta$ of $\Lambda$ consisting of the contractible loops.

We now run the classical min-max argument of Lusternik-Fet \cite{Lyusternik:1951aa}, originally introduced by Birkhoff \cite{Birkhoff:1917aa} in the special case of $S^2$.
The min-max value
\begin{align*}
 c:=\inf_{\nu}\max_{x\in B^{n-1}} E(\nu(x)),
\end{align*}
where the infimum ranges over all continuous maps $\nu:(B^{n-1},\partial B^{n-1})\to(\Delta,\Lambda^0)$ homotopic as maps of pairs to $\sigma$, is a critical value of the energy functional $E|_\Delta$.  
We claim that 
\[c\geq\inj(M,g)^2.\] 
Indeed, assume by contradiction that $c<\inj(M,g)^2$, so that there exists $\nu$ as above such that $\max E\circ\nu<\inj(M,g)^2$. Namely, each loop in the image of $\nu$ has length smaller than the injectivity radius $\inj(M,g)$. We can employ the Riemannian exponential map to build a homotopy $h_s:(B^{n-1},\partial B^{n-1})\to(\Delta,\Lambda^0)$ as
\begin{align*}
 h_s(x)(t)=\exp_{\nu(x)(0)}\big( (1-s)\exp_{\nu(x)(0)}^{-1}(\nu(x)(t)) \big).
\end{align*}
For $s=0$, we have $h_0=\nu$, whereas for $s=1$ the map $h_1$ has image contained in the space of constant loops $\Lambda^0$. But this is not possible, since $\sigma$ is not homotopic as a map of pairs to a map with image in $\Lambda^0$.

Since $c>0$, we have found a contractible closed geodesic $\gamma\in\crit^+(E)\cap\Delta$, which contradicts Theorem~\ref{mt:Klingenberg}(\ref{i:Klingenberg_1}).
\end{proof}

\begin{corollary}\label{c:homotopy_Lambda}
All higher homotopy groups of the free loop space $\Lambda$ of any Anosov Riemannian manifold vanish, i.e.
\[
\pi_{n}(\Lambda,\gamma)=0,\qquad\forall \gamma\in\Lambda,\ n\geq2.
\]
\end{corollary}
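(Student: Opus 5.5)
The plan is to use the homotopy long exact sequence of the fibration $\ev:\Lambda\to M$, $\ev(\gamma)=\gamma(0)$, recalled in Remark~\ref{r:based_loop_space}, together with Corollary~\ref{c:homotopy_M}. Fix $\gamma\in\Lambda$, set $x:=\gamma(0)$, and let $\Omega_x=\ev^{-1}(x)$ be the fiber through $\gamma$. Since $\pi_n(\Omega_x,\gamma)\cong\pi_{n+1}(M,x)$ for all $n\geq0$ (Remark~\ref{r:based_loop_space}), Corollary~\ref{c:homotopy_M} gives
\[
\pi_n(\Omega_x,\gamma)\cong\pi_{n+1}(M,x)=0,\qquad\forall n\geq1.
\]
This identification is first made for the basepoint in the component of $\Omega_x$ containing the constant loop. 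For an arbitrary basepoint $\gamma\in\Omega_x$, concatenation with $\overline\gamma$ gives a homotopy equivalence from the component of $\gamma$ to the component of the constant loop, so the same vanishing holds at every basepoint.

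For $n\geq2$, the relevant segment of the exact sequence is
\[
\pi_n(\Omega_x,\gamma)\overset{\iota_*}{\longrightarrow}\pi_n(\Lambda,\gamma)\overset{\ev_*}{\longrightarrow}\pi_n(M,x).
\]
The left term vanishes by the previous step, and the right term vanishes by Corollary~\ref{c:homotopy_M}, since $n\geq2$. Exactness at the middle term then forces $\pi_n(\Lambda,\gamma)=0$, which is the claim.

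The only delicate point is the basepoint bookkeeping: one has to make sure the fibration sequence is taken at the basepoint $\gamma$ in the fiber over $\gamma(0)$, and that the vanishing of $\pi_n(\Omega_x,\cdot)$ holds on every path component of $\Omega_x$, not only the one containing the constant loop. This is handled by the concatenation homotopy equivalence above. Alternatively, it follows from the standard fact that $\Omega_x$ is an H-space up to homotopy whose path components are all homotopy equivalent.

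The fibration property of $\ev$ on the Sobolev space $\Lambda$ is implicitly assumed in Remark~\ref{r:based_loop_space}. If needed, it can be justified by passing to $\Upsilon$ through the homotopy equivalence $\Lambda\hookrightarrow\Upsilon$, which is compatible with evaluation at $0$.
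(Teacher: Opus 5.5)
Your proposal is correct and is essentially the paper's own proof. The paper also takes the homotopy exact sequence of the evaluation fibration, uses $\pi_n(\Omega_x)\cong\pi_{n+1}(M)=0$ for $n\geq1$ together with $\pi_n(M)=0$ for $n\geq2$ from Corollary~\ref{c:homotopy_M}, and concludes by exactness; your extra remarks on basepoints in other components of $\Omega_x$ and on the fibration property of the Sobolev loop space are valid details that the paper leaves implicit.
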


\begin{proof}
We fix $\gamma\in\Lambda$ and $y=\gamma(0)$, which will serve as basepoints for the corresponding spaces. The continuous evaluation map $\ev:\Lambda\to M$, $\ev(\zeta)=\zeta(0)$ has the structure of a fibration, the fiber $\ev^{-1}(x)$ being the based loop space $\Omega_x$. We have an associated long exact sequence of homotopy groups
\begin{align*}
 ...
 \overset{\partial_*}{\longrightarrow}
 \pi_n(\Omega_y,\gamma)
 \longrightarrow
 \pi_n(\Lambda,\gamma)
 \overset{\ev_*\ }{\longrightarrow}
 \pi_n(M,y)
 \overset{\partial_*}{\longrightarrow}
 \pi_{n-1}(\Omega_y,\gamma)
 \longrightarrow
 ...
\end{align*}
As we already recalled in Remark~\ref{r:based_loop_space}, we have isomorphisms of homotopy groups $\pi_n(\Omega_y,\gamma)\cong\pi_{n+1}(M,y)$ in all degrees $n\geq0$. By Corollary~\ref{c:homotopy_M}, we have $\pi_{n}(M,y)=0$ in all degrees $n\geq2$, and therefore $\pi_n(\Omega_y,\gamma)=0$ in all degrees $n\geq1$. 
This, together with the above long exact sequence, implies that $\pi_n(\Lambda,\gamma)=0$ in all degrees $n\geq2$.
\end{proof}

Let $\Delta$ be the connected component of $\Lambda$ consisting of the contractible loops  of an Anosov Riemannian manifold $M$. Theorem~\ref{mt:Klingenberg}(\ref{i:Klingenberg_1}) implies that the inclusion of the constant loops $\Lambda^0\hookrightarrow\Delta$ is a homotopy equivalence, whose homotopy inverse can be built by means of the anti-gradient flow of $E$. In particular, the fundamental group of $\Delta$ is isomorphic to that of $M$. The situation is different for the  connected components of the free loop space containing non-contractible loops.

\begin{lemma}\label{l:pi_1_Lambda}
For each non-contractible loop $\gamma\in\Lambda$ in an Anosov Riemannian manifold, we have
\begin{align*}
 \pi_1(\Lambda,\gamma)\cong\Z.
\end{align*}
\end{lemma}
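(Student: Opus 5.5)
We will use the fibration $\ev:\Lambda\to M$, $\ev(\zeta)=\zeta(0)$, from Remark~\ref{r:based_loop_space}. We take basepoint $\gamma$ and $y=\gamma(0)$. We may also assume $\gamma$ is the index-zero closed geodesic of its component, by Lemma~\ref{l:index_zero}(\ref{i:index_zero_non_contractible}), since $\pi_1$ is independent of the basepoint inside a path component. The relevant part of the long exact sequence is
\begin{align*}
\pi_2(M,y)\longrightarrow\pi_1(\Omega_y,\gamma)\longrightarrow\pi_1(\Lambda,\gamma)\overset{\ev_*}{\longrightarrow}\pi_1(M,y)\overset{\partial_*}{\longrightarrow}\pi_0(\Omega_y)\longrightarrow\pi_0(\Lambda).
\end{align*}
By Remark~\ref{r:based_loop_space} we have $\pi_1(\Omega_y,\gamma)\cong\pi_2(M,y)$, and this vanishes by Corollary~\ref{c:homotopy_M}. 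Hence $\ev_*$ is injective, and $\pi_1(\Lambda,\gamma)$ is isomorphic to its image in $\pi_1(M,y)$.

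Next we identify that image. The standard computation for the free loop fibration shows that the connecting map is $\partial_*[\alpha]=[\alpha\ast\gamma\ast\overline\alpha]$, up to the convention for the direction of conjugation. Indeed, a loop $\alpha$ at $y$ lifts to the path of loops obtained by dragging $\gamma$ along $\alpha$, and this path ends at the conjugate. So $\ker\partial_*$, which equals $\im\ev_*$, is the centralizer $C(g)$ of $g:=[\gamma]\in\pi_1(M,y)$. Concretely, a loop in $\Lambda$ based at $\gamma$ is a torus map $S^1\times S^1\to M$, and its image under $\ev_*$ commutes with $g$. Therefore $\pi_1(\Lambda,\gamma)\cong C(g)$, and the claim reduces to showing that $C(g)\cong\Z$ whenever $g\neq1$.

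The element $g$ itself lies in $C(g)$ and has infinite order by Corollary~\ref{c:fundamental_group}, so $C(g)$ contains $\langle g\rangle\cong\Z$. To show $C(g)$ is infinite cyclic we use the $S^1$ action. The loop $\tau\mapsto\tau\cdot\gamma$ in $\Lambda$ represents the element mapping to $g$. By Lemma~\ref{l:index_zero}(\ref{i:index_zero_non_contractible}), the component $U$ of $\gamma$ contains a unique critical circle $S^1\cdot\gamma$ of index zero. By Theorem~\ref{mt:Klingenberg}(\ref{i:Klingenberg_2}), which I expect the paper to prove around here, we may use that $U$ contains no other closed geodesics at all. Then the anti-gradient flow $\Psi_s$ deformation retracts $U$ onto the critical circle $S^1\cdot\gamma$. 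The retraction follows from the Palais--Smale condition and from the Morse--Bott form \eqref{e:Morse_Bott} of $E$ near a nondegenerate minimum. It follows that $U\simeq S^1$, so $\pi_1(\Lambda,\gamma)\cong\Z$ directly.

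The main obstacle is circularity. If point (\ref{i:Klingenberg_2}) is proved \emph{using} this lemma, the deformation argument is unavailable, and we must show $C(g)\cong\Z$ group-theoretically. My first attempt in that case is as follows. Let $h\in C(g)$. The loop in $U$ given by dragging $\gamma$ along $h$ yields a map of a torus into $M$. Since $\pi_n(M)=0$ for $n\geq2$ (Corollary~\ref{c:homotopy_M}), $M$ is aspherical, so the abelian subgroup $\langle g,h\rangle$ is the fundamental group of a cover of $M$. We would then rule out $\Z^2$ by an index argument: a $\Z^2$ in $\pi_1(\Lambda,\gamma)$ would give a non-trivial $\HS{1}$ of $U$ beyond what a single critical circle of index $0$ can produce. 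This uses perfectness (Corollary~\ref{c:perfectness}) together with the evenness of the indices (Proposition~\ref{p:even_ind}), since the rational $S^1$-equivariant homology of $U$ is concentrated in even degrees. We would also need to exclude torsion-free non-cyclic subgroups such as a Klein-bottle group, or rather to show that $C(g)$ is cyclic. Since $C(g)$ is torsion-free by Corollary~\ref{c:fundamental_group}, the remaining step is to show it has rank one, again via $\HS{*}(U)$. I would pursue this parity and perfectness computation if the direct retraction proof turns out to be circular.
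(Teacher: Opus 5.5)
Your reduction is correct: since $\pi_1(\Omega_y,\gamma)\cong\pi_2(M,y)=0$ by Corollary~\ref{c:homotopy_M}, the map $\ev_*$ identifies $\pi_1(\Lambda,\gamma)$ with the centralizer $C(g)$. So the lemma is equivalent to $C(g)=\langle g\rangle$. However, the step where you actually prove this is circular. In the paper, Theorem~\ref{mt:Klingenberg}(\ref{i:Klingenberg_2}) is deduced from Corollary~\ref{c:U_circle}: its Gysin-sequence argument needs $H_1(U)\cong\Q$ and $H_q(U)=0$ for $q\geq2$. Corollary~\ref{c:U_circle} is in turn deduced from the very lemma you are proving. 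So you cannot assume that $U$ contains a single critical circle, and the deformation retraction of $U$ onto $S^1\cdot\gamma$ is not available.

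Your fallback does not close the gap. Parity and perfectness only constrain the rational (equivariant) homology of $U$, and the implications you want to draw from it are false. In the Klein bottle group $K=\langle a,b\mid bab^{-1}=a^{-1}\rangle$, the element $g=b^2$ is central, and $K$ is torsion-free and contains $\Z^2=\langle a,b^2\rangle$. Yet $H_1(K;\Q)\cong\Q$ is spanned by the class of $g$. Now use the exact Gysin segment $\HS{0}(U)\to H_1(U)\to\HS{1}(U)\to0$, whose first map sends the generator to the orbit class. If $\pi_1(U)$ were $K$ with orbit class $g$, this segment would give $\HS{1}(U)=0$. So a $\Z^2$ subgroup does not force $\HS{1}(U)\neq0$. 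Likewise, a rational first Betti number equal to one, which is the only rank that $\HS{*}(U)$ sees, does not make a torsion-free group cyclic.

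The missing idea is a homotopy-level argument that $\pi_1(S^1\cdot\gamma,\gamma)\to\pi_1(\Lambda,\gamma)$ is surjective. The paper gets it by a mountain pass. Take $E(\gamma)=\min E|_U$, and suppose some path from $\gamma$ to $S^1\cdot\gamma$ is non-trivial in $\pi_1(\Lambda,S^1\cdot\gamma,\gamma)$. It produces a min-max critical value above $E(\gamma)$. As in the proof of Lemma~\ref{l:index_zero}, there must be a critical circle of index $1$ at that level, contradicting Proposition~\ref{p:even_ind}. Hence the relative group vanishes and the map is surjective. Since the orbit loop $s\mapsto s\cdot\gamma$ has infinite order (Corollary~\ref{c:fundamental_group}, as you observed), the map is an isomorphism. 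In your notation this is exactly $C(g)=\langle g\rangle$.
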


\begin{proof}
Fix a non-contractible loop $\gamma\in\Lambda$ in an Anosov Riemannian manifold $M$.
The $S^1$ action on $\Lambda$ provides a loop $\Gamma:S^1\to\Lambda$ given by
\begin{align*}
 \Gamma(s)=s\cdot\gamma.
\end{align*}
We claim that $[\Gamma]$ is an element of infinite order in the fundamental group $\pi_1(\Lambda,\gamma)$. Indeed, assume by contradiction that $[\Gamma]^m=1$ for some integer $m\geq1$. This means that we have a homotopy $h_r:S^1\to\Lambda$ such that $h_0(s)=\Gamma(ms)$ for all $s\in[0,1]$, $h_r(0)=\gamma$ for all $r\in[0,1]$, and $h_1(s)=\gamma$ for all $s\in[0,1]$. We obtain an associated homotopy $k_r:S^1\to M$, $k_r(s)=h_r(s)(0)$, which satisfies $k_0=\gamma^m$ and $k_1\equiv\gamma(0)$ (see Figure~\ref{f:homotopy}). This implies that $\gamma^m$ is contractible, which contradicts Corollary~\ref{c:fundamental_group}.

\begin{figure}
\begin{center}
\includegraphics{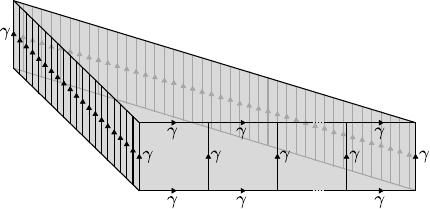}
\caption{The homotopy $h_r$ visualized as a map from a polytope to $M$. All edges with arrows are mapped to $\gamma$. The upper face is the homotopy $k_r$ that contracts the loop $\gamma^m$.}
\label{f:homotopy}
\end{center}
\end{figure}

Since the fundamental group $\pi_1(\Lambda,\gamma)$ does not depend on the specific choice of $\gamma$ within a connected component $U$ of $\Lambda$, we can assume without loss of generality that $E(\gamma)=\min E|_U$. We claim that the relative homotopy $\pi_1(\Lambda,S^1\cdot\gamma,\gamma)$ is trivial. Indeed, assume by contradiction that this is not the case, so that there exists a continuous path $\Gamma:([0,1],\{0,1\},\{0\})\to(\Lambda,S^1\cdot\gamma,\gamma)$ that is not homotopic as a map of triples to the constant path at $\gamma$. We consider the min-max value
\begin{align*}
c:=\inf_{\Psi}\max_{s\in[0,1]} E(\Psi(s)),
\end{align*}
where the infimum ranges over all continuous paths 
\[\Psi:([0,1],\{0,1\},\{0\})\to(\Lambda,S^1\cdot\gamma,\gamma)\] 
homotopic as a map of triples to $\Gamma$. 
The min-max $c$ is a critical value of $E|_U$ larger than $E(\gamma)$. Arguing as in the proof of Lemma~\ref{l:index_zero}, we infer that there exists a closed geodesic $\zeta\in\crit^+(E)\cap E^{-1}(c) \cap U$ of index $\ind(\zeta)=1$. This contradicts Proposition~\ref{p:even_ind}.

Since $\pi_1(\Lambda,S^1\cdot\gamma,\gamma)$ is trivial, the inclusion induces a surjective homomorphism 
\begin{align}
\label{e:surjective_pi_1}
\pi_1(S^1\cdot\gamma,\gamma)\rightarrow\pi_1(\Lambda,\gamma).
\end{align}
Since $\pi_1(S^1\cdot\gamma,\gamma)\cong\Z$ and $\pi_1(\Lambda,\gamma)$ contains the element $[\Gamma]$ of infinite order, we infer that the surjective homomorphism \eqref{e:surjective_pi_1} is actually an isomorphism.
\end{proof}

\begin{corollary}\label{c:U_circle}
In any Anosov Riemannian manifold, any connected component of the free loop space $\Lambda$ containing non-contractible loops is homotopy equivalent to~$S^1$.
\end{corollary}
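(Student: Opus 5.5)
The plan is to build the homotopy equivalence from the orbit map and the homotopy computations already established. Fix a connected component $U\subset\Lambda$ containing non-contractible loops, and let $\gamma\in\crit^+(E)\cap U$ be a closed geodesic with $E(\gamma)=\min E|_U$. By Lemma~\ref{l:index_zero}(\ref{i:index_zero_non_contractible}), $S^1\cdot\gamma$ is the unique critical circle of index zero in $U$. I will consider the orbit map
\[
f:S^1\to U,\qquad f(s)=s\cdot\gamma,
\]
and show that it is a homotopy equivalence. Since $\mul(\gamma)$ might be larger than one, I would first note that $f$ factors through the embedded circle $S^1\cdot\gamma\cong S^1/\Z_{\mul(\gamma)}$. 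It is cleanest to work with the inclusion $j:S^1\cdot\gamma\hookrightarrow U$ instead, which differs from $f$ only by a finite covering of circles. Both source and target of $j$ are circles, so the conclusion is unaffected.

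The key step is to check that $j$ is a weak homotopy equivalence.
\begin{enumerate}[(a)]
\item On $\pi_0$ this is trivial, since $U$ is connected.
\item On $\pi_1$, the proof of Lemma~\ref{l:pi_1_Lambda} shows precisely that $j_*:\pi_1(S^1\cdot\gamma,\gamma)\to\pi_1(\Lambda,\gamma)$ is an isomorphism: it is surjective because $\pi_1(\Lambda,S^1\cdot\gamma,\gamma)$ is trivial, and injective because the target contains the element $[\Gamma]$ of infinite order.
\item For $n\geq2$, Corollary~\ref{c:homotopy_Lambda} gives $\pi_n(U,\gamma)=0$, which agrees with $\pi_n(S^1)=0$.
\end{enumerate}
Hence $j$ induces isomorphisms on all homotopy groups, for every basepoint since both spaces are path-connected.

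To upgrade this to a genuine homotopy equivalence, I will invoke Whitehead's theorem. That requires $U$ to have the homotopy type of a CW complex. I expect this to be the main point needing justification, though it is standard. $\Lambda=W^{1,2}(S^1,M)$ is a smooth Hilbert manifold modelled on a separable Hilbert space, and such manifolds (indeed metrizable ANRs) have the homotopy type of CW complexes by Palais/Milnor. Its open subset $U$, a connected component, inherits this property. Alternatively, one could use that $\Lambda\hookrightarrow\Upsilon$ is a homotopy equivalence and that $\Upsilon=C^0(S^1,M)$ has CW homotopy type by Milnor's theorem on mapping spaces. With this in hand, Whitehead's theorem yields that $j$, and hence $f$, is a homotopy equivalence, so $U\simeq S^1$.
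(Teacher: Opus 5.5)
Your proposal is correct and is essentially the paper's argument. The homotopy groups of $U$ come from Corollary~\ref{c:homotopy_Lambda} and the proof of Lemma~\ref{l:pi_1_Lambda}, and Whitehead's theorem (using that the Hilbert manifold $\Lambda$ has CW homotopy type) upgrades the inclusion $j:S^1\cdot\gamma\hookrightarrow U$ to a homotopy equivalence; your $j$ plays the role of the paper's $\pi_1$-generator $\Gamma$.

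One slip: your final ``and hence $f$'' is false when $m=\mul(\gamma)>1$, which does occur, e.g.\ when $\gamma$ is an iterate of a minimizing closed geodesic. In that case $f=j\circ q$ with $q$ an $m$-fold covering, so $f$ induces multiplication by $m$ on $\pi_1$ and is not a homotopy equivalence. Only $j$ is, and that already gives $U\simeq S^1$.
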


\begin{proof}
Let $U$ be a connected component of the free loop space $\Lambda$ containing non-contractible loops.
By Corollary~\ref{c:homotopy_Lambda} and Lemma~\ref{l:pi_1_Lambda}, the homotopy groups of $U$ are given by
\begin{align*}
 \pi_n(U)
 \cong
\begin{cases}
\Z,& \mbox{if }n=1,\\
0,& \mbox{if }n\geq2.
\end{cases}
\end{align*}
Since the free loop space $\Lambda$ is a Hilbert manifold, it has the homotopy type of a CW-complex, and we can apply Whitehead theorem \cite[Th.~4.5]{Hatcher:2002aa}: since any generator $\Gamma:S^1\to U$ of the fundamental group $\pi_1(U)$ induces an isomorphism of all homotopy groups 
\[\Gamma_*:\pi_*(S^1)\overset{\cong}{\longrightarrow}\pi_*(U),\]
the map $\Gamma$ is a homotopy equivalence.
\end{proof}

\noindent\textbf{Theorem~\ref{mt:Klingenberg}(\ref{i:Klingenberg_2}).} \emph{On any Anosov Riemannian manifold, every homotopy class of non-contractible loops contains a unique closed geodesic. Namely, on any connected component $U$ of the free loop space containing non-contractible loops, the set of critical points of the energy consists of a unique critical circle 
\[\crit(E)\cap U=S^1\cdot\gamma,\]
and $E(\gamma)=\min E|_U$.}

\begin{proof}
Assume by contradiction that there exists a connected component $W\subset\Lambda$ of non-contractible loops containing more than one critical circle of the energy functional $E$. By Lemma~\ref{l:index_zero}(\ref{i:index_zero_non_contractible}), there must exist a critical circle $S^1\cdot\zeta\subset\crit^+(E)\cap W$ of positive index $\ind(\zeta)>0$. Let $m=\mul(\zeta)$, so that $\zeta=\gamma^m$ for some primitive closed geodesic $\gamma\in\crit^+(E)$ of energy $c:=E(\gamma)$. Let $U$ be the connected component of $\Lambda$ containing $\gamma$. 
Since the closed Riemannian manifold is Anosov, and in particular all the closed geodesics are hyperbolic, we have $n:=\ind(\gamma)=\tfrac1m\ind(\zeta)>0$. Let $\epsilon>0$ be small enough so that $(c,c+\epsilon)$ does not contain critical values of the energy functional $E|_U$. As explained in Section~\ref{ss:descending_manifolds}, the primitive closed geodesic $\gamma$ produces non-trivial $S^1$-equivariant homology 
\[
\HS{n}(U^{<c+\epsilon},U^{<c})
\cong
\HS{n}(U^{\leq c},U^{<c})
\neq
0.
\]
Since $E$ is \emph{perfect} for the relative $S^1$-equivariant homology with rational coefficients (Corollary~\ref{c:perfectness}), and since $U^{<a}=\varnothing$ for $a=\min E|_U>0$, we infer that 
\begin{align*}
\HS{n}(U)\neq0.
\end{align*}
The $S^1$-equivariant homology is related to the ordinary homology via the classical Gysin long exact sequence \cite[page 438]{Hatcher:2002aa}
\begin{align*}
 ...
 \longrightarrow
 \HS{*+2}(U)
 \overset{\smallfrown e}{\longrightarrow}
 \HS{*}(U)
 \longrightarrow
 H_{*+1}(U)
 \longrightarrow
 \HS{*+1}(U)
 \overset{\smallfrown e}{\longrightarrow}
 ...
\end{align*}
where $\smallfrown e$ is the homomorphism given by the cap product with the Euler cohomology class $e\in H^2_{S^1}(U)$. Since all the closed geodesics have even index (Proposition~\ref{p:even_ind}), we have
\[\HS{1}(U)=0.\] 
The Gysin sequence implies that the homomorphism $\HS{0}(U)\to H_{1}(U)$ is surjective. By Corollary~\ref{c:U_circle}, we have 
\[H_{1}(U)\cong H_1(S^1)\cong\Q.\] 
Since $\HS{0}(U)\cong\Q$ as well, we infer that the homomorphism $\HS{0}(U)\to H_{1}(U)$ is an isomorphism. Therefore, the Gysin sequence implies that the homomorphism 
\[\smallfrown e:\HS2(U)\to\HS0(U)\] 
vanishes. Let $p$ be the minimal positive integer such that $\HS{2p}(U)\neq0$, and notice that $2p\leq n$. We infer that the homomorphism 
\[\smallfrown e:\HS{2p}(U)\to\HS{2p-2}(U)\] 
vanishes, but $\HS{2p}(U)$ is non-trivial. The Gysin sequence implies that the homomorphism 
\[H_{2p}(U)\to\HS{2p}(U)\] 
is surjective, and in particular $H_{2p}(U)$ is non-trivial. However, Corollary~\ref{c:U_circle} asserts that $U$ is homotopy equivalent to $S^1$, and therefore $H_q(U)$ is trivial in all degrees $q\geq2$. This gives a contradiction.
\end{proof}

Theorem~\ref{mt:Klingenberg}(\ref{i:Klingenberg_1}-\ref{i:Klingenberg_2}) readily implies the following preliminary statement on the absence of conjugate points. We say that a closed geodesic $\gamma\in\crit^+(E)$ is without conjugate points when, for each $t>0$ (including values $t>1$), the points $\gamma(0)$ and $\gamma(t)$ are not conjugate along $\gamma|_{[0,t]}$.

\begin{lemma}
\label{l:no_conjugate_points_closed}
On any Anosov Riemannian manifold, no closed geodesic has conjugate points.
\end{lemma}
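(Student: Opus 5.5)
The plan is to show that every closed geodesic $\gamma\in\crit^+(E)$ has index zero, together with all of its iterates, and then to apply the Morse index bound \eqref{e:index_bound_conjugate_points} to the iterates.

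\textbf{Step 1: $\ind(\gamma)=0$.} By Theorem~\ref{mt:Klingenberg}(\ref{i:Klingenberg_1}), $\gamma$ is not contractible, so it lies in a connected component $U$ of $\Lambda$ consisting of non-contractible loops. By Theorem~\ref{mt:Klingenberg}(\ref{i:Klingenberg_2}), $\crit(E)\cap U=S^1\cdot\gamma_U$ with $E(\gamma_U)=\min E|_U$. Hence $\gamma\in S^1\cdot\gamma_U$ is a global minimizer of $E|_U$, and so $\ind(\gamma)=0$.

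\textbf{Step 2: $\ind(\gamma^m)=0$ for all $m\geq1$.} By Corollary~\ref{c:fundamental_group}, each iterate $\gamma^m$ is again non-contractible. Step 1 therefore applies to $\gamma^m$ and gives $\ind(\gamma^m)=0$. Alternatively, since $\gamma$ is hyperbolic, one can use $\ind(\gamma^m)=m\,\ind(\gamma)=0$ directly.

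\textbf{Step 3: no conjugate points.} Suppose, by contradiction, that $\gamma(0)$ and $\gamma(t)$ are conjugate along $\gamma|_{[0,t]}$ for some $t>0$, where $t$ is measured in the $1$-periodic parametrization; $t$ may exceed $1$. Choose an integer $m>t$. The iterate $\gamma^m$ is reparametrized so that $\gamma^m(s)=\gamma(ms)$, and this reparametrization preserves conjugacy. Hence $\gamma^m(0)$ and $\gamma^m(t/m)$ are conjugate along $\gamma^m|_{[0,t/m]}$, with $t/m\in(0,1)$. Equivalently, in terms of the geodesic flow with $\tau_m=m\tau$, the vertical intersection at time $\tau t$ is nontrivial, and this time lies in $(0,\tau_m)$. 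The bound \eqref{e:index_bound_conjugate_points} then gives $\ind(\gamma^m)\geq1$, which contradicts Step 2.

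\textbf{Main obstacle.} The step requiring the most care is the bookkeeping in Step 3. One must check that the index bound \eqref{e:index_bound_conjugate_points}, applied to $\gamma^m$, really counts the conjugate point at time $t/m$. Concretely, the unit vector $v$ is the same for $\gamma$ and $\gamma^m$, while $\tau$ scales by the factor $m$, so the time $\tau t$ at which the conjugacy occurs falls in the open range $(0,\tau_m)$ covered by the sum. The remaining steps are direct citations of earlier results.
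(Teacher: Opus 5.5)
Your proof is correct and follows essentially the same route as the paper: every closed geodesic has index zero by Theorem~\ref{mt:Klingenberg}(\ref{i:Klingenberg_1})--(\ref{i:Klingenberg_2}), and a conjugate point is placed inside the range of the bound \eqref{e:index_bound_conjugate_points} by passing to an iterate $\gamma^m$, using $\ind(\gamma^m)=m\,\ind(\gamma)$ (or, as you note, applying Step~1 directly to the closed geodesic $\gamma^m$). Your explicit bookkeeping, with $m>t$ and the rescaling $\tau_m=m\tau$, only spells out what the paper leaves implicit.
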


\begin{proof}
If a closed geodesic $\gamma\in\crit^+(E)$ has conjugate points, the lower bound \eqref{e:index_bound_conjugate_points} implies that $\ind(\gamma^m)>0$ for some integer $m\geq1$, and therefore $\ind(\gamma)=\tfrac1m\ind(\gamma^m)>0$ as well. But on any Anosov Riemannian manifold, Theorem~\ref{mt:Klingenberg}(\ref{i:Klingenberg_1}-\ref{i:Klingenberg_2}) implies that any closed geodesic $\gamma\in\crit^+(E)$ is a global minimizer of the energy functional $E$ in its connected component of the free loop space, and in particular $\ind(\gamma)=0$.
\end{proof}

Theorem~\ref{mt:Klingenberg}(\ref{i:Klingenberg_3}) is a consequence of Lemma~\ref{l:no_conjugate_points_closed} together with some fundamental properties of general Anosov flows.\vspace{10pt}

\noindent\textbf{Theorem~\ref{mt:Klingenberg}(\ref{i:Klingenberg_3}).} 
\emph{On any Anosov Riemannian manifold, there are no conjugate points.}

\begin{proof}
Let $(M,g)$ be an Anosov Riemannian manifold. We denote by $P\subset SM$ the invariant subset of periodic orbits of its geodesic flow $\phi_t:SM\to SM$, i.e.
\begin{align*}
 P
 =
 \bigcup_{t>0} \fix(\phi_t).
\end{align*}
By Anosov closing lemma \cite[Cor.~11.9]{Guillarmou:2026aa}, $P$ is dense in the non-wandering set of $\phi_t$.
Since $\phi_t$ preserves the canonical contact form $\lambda$, in particular it preserves the volume form $\lambda\wedge (d\lambda)^{n-1}$ on $SM$, where $n=\dim(M)$. Being a volume-preserving Anosov flow, $\phi_t$ is transitive \cite[Prop.~11.15]{Guillarmou:2026aa}, and in particular its non-wandering set is the whole $SM$. Therefore, $P$ is a dense subset of $SM$.

Assume by contradiction that there exist two points $x=\gamma(0)$ and $y=\gamma(\tau)$ that are conjugate along a geodesic segment $\gamma:[0,\tau]\to M$ parametrized with unit speed $\|\dot\gamma\|_g\equiv1$. Let $v_n\in P$ be a sequence converging to $\dot\gamma(0)$ as $n\to\infty$, and $\gamma_n:\R\to M$ the corresponding closed geodesic such that $\dot\gamma_n(0)=v_n$. For $n$ large enough, there exists $\tau_n\in(\tau-1,\tau+1)$ such that the points $\gamma_n(0)$ and $\gamma_n(\tau_n)$ are conjugate along $\gamma_n|_{[0,\tau_n]}$. This contradicts Lemma~\ref{l:no_conjugate_points_closed}.
\end{proof}

\bibliography{biblio}

\vspace{10pt}

\end{document}